\documentclass[reqno,10pt]{article}
\usepackage{graphicx}
\usepackage{amsmath}
\usepackage{amssymb}
\usepackage{amsthm}
\usepackage{enumitem}
\usepackage{bbm}
\usepackage[l2tabu,orthodox]{nag}
\usepackage[all,error]{onlyamsmath}
\usepackage[strict]{csquotes}
\usepackage{url}
\def\N {\mathbb{N}}

\def\H {\mathcal{H}}

\def\C {\mathbb{C}}
\def\R {\mathbb{R}}

\def\E {\mathcal{E}}
\def\K{\mathcal{K}}

\def\Her{\mathbb{H}}
\newtheorem{theorem}{Theorem}
\newtheorem{lemma}[theorem]{Lemma}
\newtheorem{corollary}[theorem]{Corollary}
\newtheorem{prop}[theorem]{Proposition}
\theoremstyle{definition}
\newtheorem{definition}[theorem]{Definition}
\newtheorem{remark}[theorem]{Remark}

\numberwithin{equation}{section}
\numberwithin{theorem}{section}

\newenvironment{OMabstract}{\noindent\textbf{Abstract.} }{\medskip}
\newenvironment{OMsubjclass}{\noindent\textbf{Mathematics Subject Classification (2020):} }{\medskip}
\newenvironment{OMkeywords}{\noindent\textbf{Keywords:}  }{\medskip}

\begin{document}

\author{Mario Alberto Ruiz Caballero} %Please write full names and surnames of all co-authors here.
\title{On spectral stability for self-adjoint extensions}
\maketitle

\begin{OMabstract}
    %------------------------- Please type your abstract here ------------------
    We prove that given a symmetric completely non-selfadjoint operator $B$ with finite deficiency indices $(n,n)$ and a boundary triplet 
 $\left( \C^{n},\Gamma_{1},\Gamma_{2}\right)$ for $B^{*}$, the set of points in the spectrum of $A_{1}$ (the
 self-adjoint extension with domain $Ker\;\Gamma_{1}$) which are not eigenvalues of maximum multiplicity for any self-adjoint extension of $B$ disjoint of $A_{1}$, is a dense $\textit{G}_{\delta}$ set in $\sigma(A_{1})$. Furthermore, a proof of a Malamud's theorem that generalizes a well-known result of the Aronszajn-Donoghue theory on the characterization of eigenvalues is offered.
    %---------------------------------------------------------------------------
\end{OMabstract}

\begin{OMkeywords}
    %------------------------- Please type your keywords here ------------------
    boundary triplet, self-adjoint extension, completely non-selfadjoint operator, finite deficiency indices, eigenvalue of maximum multiplicity, Aronszajn-Donoghue theory.
    %---------------------------------------------------------------------------
\end{OMkeywords}

\begin{OMsubjclass}
    %------------------------- Please type your 2020 MSC numbers here ----------
    47B02, 47B25, 47A55, 47A10.
    %---------------------------------------------------------------------------
\end{OMsubjclass}

%------------------- Type contents of your article below --------------------
\section{Introduction}
In this work we continue the study started in \cite{Ruiz}. The results in such
paper, in particular Theorem 1.1 and Proposition 4.3, motivated the research on spectral stability for a family of
self-adjoint extensions of a symmetric operator with finite deficiency indices $(n,n)$ where $n>1$
(for the case of rank one regular perturbations and Sturm-Liouville operators, see \cite{Gordon, RMS, Rio}).
 Using Boundary Triplets Theory and denoting by $\Her_{n}$ the set of symmetric matrices $n\times n$ on $\C$, the main
 result is the following:
 
 %a theorem on forbidden energies-like is obtained for a sub-family of self-adjoint extensions of a simple operator with finite deficiency indices for eigenvalues of maximal multiplicity. 
 \begin{theorem}[Forbidden energies-like]\label{maintheorem}
Let $B$ denote a symmetric completely non-selfadjoint operator on a Hilbert space $\H$ with finite deficiency indices $(n,n)$ and
$\Pi=\left( \C^{n},\Gamma_{1},\Gamma_{2}\right)$ be a boundary triplet for $B^{*}$. Then 
\begin{equation}\label{u}
\left\lbrace x\in\sigma(A_{1}):x\not\in\sigma_{p}^{max}(A_{D}),\;for\;any\;D\in\Her_{n}\right\rbrace 
\end{equation}
is dense $G_{\delta}$ in $\sigma(A_{1})$.
\end{theorem}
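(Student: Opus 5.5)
The plan is to use the Weyl function $M(z)$ of the boundary triplet $\Pi$ and the Krein-type resolvent formula for the extensions $A_D$ with domain $\operatorname{Ker}(\Gamma_2 - D\Gamma_1)$; these are exactly the self-adjoint extensions disjoint from $A_1$, since $A_1$ corresponds to $\operatorname{Ker}\Gamma_1$. The proof then splits into two parts.

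\emph{Step 1: the set in (\ref{u}) is a $G_\delta$.} Write it as $\sigma(A_1)\setminus\bigcup_{D\in\Her_n}\sigma_p^{max}(A_D)$. The Malamud characterization (the generalization of Aronszajn--Donoghue announced in the abstract, which I would establish first) says the following. The point $x$ is an eigenvalue of $A_D$ of multiplicity $n$ if and only if two conditions hold: the limit $M(x+i0)=\lim_{\varepsilon\downarrow0}M(x+i\varepsilon)$ exists and is self-adjoint, with $M(x+i0)=D$; and $\lim_{\varepsilon\downarrow 0}\varepsilon^{-1}\operatorname{Im}M(x+i\varepsilon)$, or equivalently $\int (t-x)^{-2}d\Sigma(t)$, is a finite positive definite matrix. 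Here $\Sigma$ is the matrix spectral measure of $M$, i.e.\ $\operatorname{Im}M(x+i\varepsilon)/\varepsilon$ increases to $\int d\Sigma(t)/((t-x)^2+\varepsilon^2)$. Consequently the union over $D$ is contained in the set
\[
S=\Bigl\{x:\ \operatorname{tr}\!\int_{\R}\frac{d\Sigma(t)}{(t-x)^2}<\infty\Bigr\}.
\]
So it suffices to show that $\sigma(A_1)\setminus S$ is a dense $G_\delta$ in $\sigma(A_1)$; a dense $G_\delta$ subset of $\sigma(A_1)$ contained in (\ref{u}) makes (\ref{u}) residual, hence dense. To get the $G_\delta$ property for $\R\setminus S$ itself rather than a subset, I would use the $F_\sigma$ description
\[
S=\bigcup_{k}\Bigl\{x:\ \operatorname{tr}\!\int\frac{d\Sigma}{(t-x)^2}\le k\Bigr\}.
\]
Each set in this union is closed by Fatou's lemma, since $x\mapsto\int(t-x)^{-2}d\Sigma$ is lower semicontinuous. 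Hence $\R\setminus S$ is a $G_\delta$. Since $\bigcup_D\sigma_p^{max}(A_D)\subseteq S$, (\ref{u}) contains the $G_\delta$ set $\sigma(A_1)\setminus S$.

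\emph{Step 2: density in $\sigma(A_1)$.} Here I use that $B$ is completely non-selfadjoint, i.e.\ simple. Then the spectral measure of $A_1$ is equivalent to $\Sigma$, so $\sigma(A_1)=\operatorname{supp}\Sigma$. By Baire's theorem applied in the closed set $\sigma(A_1)$, it suffices to show that $\sigma(A_1)\setminus S$ is dense in $\sigma(A_1)$, i.e.\ that $S$ contains no relatively open piece $I\cap\sigma(A_1)\neq\emptyset$. I would argue this in two cases.
\begin{itemize}
\item If $x\in\operatorname{supp}\Sigma$ is an atom of $\Sigma$, then the integral is infinite, so $x\notin S$.
\item If $I\cap\operatorname{supp}\Sigma$ has no atoms, then $I\cap\operatorname{supp}\Sigma$ is a perfect set. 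For $\Sigma$-almost every $x$ the trace measure $\tau=\operatorname{tr}\Sigma$ satisfies $\int (t-x)^{-2}d\tau=\infty$. This holds because $\liminf_{r\to0}\tau(x-r,x+r)/r>0$ for $\tau$-a.e.\ $x$: in the singular part the ratio even tends to infinity, and in the absolutely continuous part it tends to a positive density a.e. Since $\tau(I)>0$, such points exist in every relatively open piece.
\end{itemize}
Thus every relatively open piece of $\sigma(A_1)$ meets $\R\setminus S$, giving density.

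The main obstacle is Step 1's reliance on the exact Malamud characterization, in particular that maximal multiplicity forces $\operatorname{Im}M(x+i\varepsilon)/\varepsilon$ to stay bounded in every direction, hence finiteness of the full trace. For this I would start from the identity $\ker(A_D-x)=\gamma(x)\ker(D-M(x))$ via the $\gamma$-field. Multiplicity $n$ then forces $\gamma(x)$ to be an injective map from $\C^n$ into $\H$ whose range lies in $\ker(B^*-x)$. From this, $\|\gamma(x)h\|^2=\langle\int(t-x)^{-2}d\Sigma\, h,h\rangle$ gives the finiteness. I would verify this identity carefully before relying on it.
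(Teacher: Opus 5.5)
Your argument establishes only the inclusion $\bigcup_{D\in\Her_n}\sigma_p^{max}(A_D)\subseteq S$. So what you actually prove is that the set in the statement \emph{contains} the dense $G_\delta$ set $\sigma(A_1)\setminus S$, i.e.\ that it is residual, hence dense, in $\sigma(A_1)$. Your reduction (it suffices that $\sigma(A_1)\setminus S$ be a dense $G_\delta$ contained in the set) is not valid for the $G_\delta$ part. The theorem asserts that the set itself \emph{is} a $G_\delta$, and that property does not pass to supersets. Your Fatou argument gives it for $\sigma(A_1)\setminus S$, which is not yet known to be the set in question.

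The missing step is the reverse inclusion $S\subseteq\bigcup_{D}\sigma_p^{max}(A_D)$. It upgrades the containment to an equality between the set in the statement and $\sigma(A_1)\setminus S$; this is exactly the identity (\ref{setdeMalam}) the paper proves before applying Theorem~\ref{teo2.1gral}. It needs two ingredients you do not supply:
\begin{enumerate}
\item[(i)] If $T(x)=\int d\Sigma(t)/(t-x)^2$ is finite, then $M(x+i0)$ exists and is Hermitian. This follows by dominated convergence in the Herglotz representation, also using $\int d\Sigma/(1+t^2)<\infty$ (Proposition~\ref{implicaciongrl}). Hence $D:=M(x+i0)\in\Her_n$ is an admissible parameter.
\item[(ii)] The converse half of Theorem~\ref{malamud}. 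You quote it as an equivalence but use only one direction, and your $\gamma$-field sketch gives no argument for it.
\end{enumerate}

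Within your own framework, (ii) is short. From $\gamma(w)^*\gamma(z)=(M(z)-M(w)^*)/(z-\bar w)=\int d\Sigma(t)/((t-z)(t-\bar w))$ one gets
\begin{equation*}
\|\gamma(x+i\varepsilon)h-\gamma(x+i\delta)h\|^2=\int_{\R}\Bigl|\frac{1}{t-x-i\varepsilon}-\frac{1}{t-x-i\delta}\Bigr|^2 d\langle\Sigma(t)h,h\rangle\longrightarrow 0
\end{equation*}
when $T(x)$ is finite. Hence $\gamma(x+i\varepsilon)h\to f_h$ in the graph norm of $B^*$, with $B^*f_h=xf_h$, $\Gamma_1f_h=h$ and $\Gamma_2f_h=M(x+i0)h=Dh$. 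Thus $h\mapsto f_h$ embeds $\C^n$ into $\ker(A_D-x)$. Conversely, $\dim\ker(A_D-x)\le n$ because $\Gamma_1$ is injective on $\ker(A_D-x)$: $A_D$ and $A_1$ are disjoint and $B$ has no eigenvalues. (The paper instead shows $\Omega_D(\{x\})=T(x)^{-1}$ and uses Corollary~\ref{conjetura}.)

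Your forward sketch also needs repair, since $\gamma(x)$ and $M(x)$ are undefined for $x\in\sigma(A_1)$. Instead, take $f\in\ker(A_D-x)$ with $\Gamma_1f=h$; this exists because $\Gamma_1$ maps the $n$-dimensional space $\ker(A_D-x)$ injectively, hence onto $\C^n$. Then $\gamma(x+i\varepsilon)h=f+i\varepsilon(A_1-x-i\varepsilon)^{-1}f$, which gives $\langle T(x)h,h\rangle=\lim_{\varepsilon\to0}\|\gamma(x+i\varepsilon)h\|^2\le\|f\|^2$.

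The remaining parts of your proposal are correct and differ from the paper. Lower semicontinuity via Fatou is a cleaner route to $\R\setminus S$ being $G_\delta$ than the paper's regularized $\bigcap_k\bigcup_m$ description. Your density argument (positive lower density of $\tau$ at $\tau$-a.e.\ point, hence $\tau(S)=0$) avoids the paper's boundary-value analysis of $\mathrm{Im}\,M(x+i0)$, and it even shows that $S$ is $\Sigma$-null. The atom/non-atom case split and the appeal to Baire are superfluous.
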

 To prove it, a generalization of a theorem of the Aronszajn-Donoghue theory is required which was proposed by M. Malamud in \cite{MALAMUD}. It is worth mentioning that a proof of said theorem does not appear in \cite{MALAMUD}, so a proof is offered in this paper.\\
 
The paper is diveded as follows. In Section 2 we define boundary triplets, disjoint proper closed extensions
and state the extension theorem by boundary triplets. In Section 3 the definitions of matrix-valued measure, 
Nevanlinna-Herglotz matrix and integral with respect to matrix-valued measures through quadratic forms are given.
Some results of \cite{Ruiz} for the scalar case are extended to the matrix-valued one. In section 4 we remind results
of \cite{MALAMUD2, SOLBIR} into the approach of direct integrals of Hilbert spaces. We define the multiplicity functions of both matrix-valued measures and self-adjoint operators. We also define the space $L^{2}(\Omega, \C^{n})$,
where $\Omega$ is a matrix-valued measure, and state the Kats Theorem. In Section 5 we define Weyl functions, completely non-selfadjoint operators as well as
eigenvalues of maximum multiplicity. The aforementioned Malamud's theorem is proved  by generalizing the classical method of \cite{Aron} and employing Kats theory. In Section 6, we present a similar result to Malamud's theorem by
changing the rol of the operator $A_{1}$ by anoter self-adjoint extension. Moreover Theorem \ref{maintheorem} is obtained as an immediate consequence of Malamud's theorem.

 \section{Boundary Triplets Theory}
 
This section relies on \cite{MALAMUD} and \cite[Chapter 14]{Sch}.
%We begin with the Boundary Triplets Theory (see \cite{Sch, MALAMUD, MALAMUD2}).

\begin{definition}
Let $B$ denote a densely defined symmetric operator on  $\H$. We shall call deficiency spaces to the subspaces
%\footnote{Given an operator $A$ densely defined on a Hilbert space, we denote as $A^{*}$ its adjoint operator that acts on the same Hilbert space.}

\begin{equation*}
K_{\pm}(B) :=Ran(B\pm iI)^{\perp}=Ker(B^{*}\mp iI).
\end{equation*}
Also, we shall call deficiency indices to $B$ the pair $\left( d_{+}(B), d_{-}(B)\right) $ where
\begin{equation*}
d_{\pm}(B):=dim\;K_{\pm}(B).
\end{equation*}
\end{definition}

\begin{definition}
Let $B$ denote a densely defined symmetric operator on $\H$ with equal deficiency indices. A triplet
$\left( \K,\Gamma_{1},\Gamma_{2}\right)$ where $\K$ is a Hilbert space such that $dim\;\K=d_{\pm}(H)$ and
$\Gamma_{i}:D(B^{*})\longrightarrow\K$ is a linear mapping with $i=1,2$ is said to be a boundary triplet for $B^{*}$ if:
\begin{itemize}
\item Satisfies the abstract Green identity, i. e.,
\begin{equation*}
\langle B^{*}x,y\rangle -\langle x,B^{*}y\rangle=\langle \Gamma_{2}x,\Gamma_{1}y\rangle_{\K} -\langle \Gamma_{1}x,\Gamma_{2}y\rangle_{\K}
\end{equation*}
for each $x,y\in D(B^{*})$.
\item The linear mapping $\Gamma=\Gamma_{1}\times\Gamma_{2}:D(B^{*})\longrightarrow\K\bigoplus\K$ given by
$\Gamma x=\left( \Gamma_{1}x,\Gamma_{2}x\right) $ for each $x\in D(B^{*})$ is surjective.
\end{itemize}
We denote by $A_{i}$ with $i=1,2$ the self-adjoint extension of $B$ such that $D(A_{i})=Ker\;\Gamma_{i}$.
\end{definition}
\begin{definition}
Let $B$ be a densely defined symmetric operator on $\H$. We define a closed proper extension of $B$ as a closed operator $A$ on $\H$ such that $B\subseteq A\subseteq B^{*}$. We say that two proper closed extensions $A$ and $A'$ of $B$ are disjoint if $D(A)\cap D(A')=D(B)$. 
\end{definition}

We state the Extension Theorem by Boundary Triplets.

\begin{theorem}[Proposition 14.7, \cite{Sch} and Proposition 1, \cite{MALAMUD}]\label{BT-theorem}
Let $B$ denote a densely defined closed symmetric operator on $\H$ with equal deficiency indices.
We denote by $\mathcal{C}(B)$ the set of proper closed extensions of $B$. Then there exists a boundary triplet
$\left( \K,\Gamma_{1},\Gamma_{2}\right)$ for $B^{*}$ such that there is a bijection between $\mathcal{ C}(B)$ and the
set of closed relations on $\K$ as follows:
\begin{itemize}
\item If $\Lambda$ is a closed relation on $\K$, then $A_{\Lambda}\in\mathcal{C}(B)$ is given by
\begin{equation*}
D(A_{\Lambda}):=\left\lbrace x\in D(B^{*}):\Gamma x\in\Lambda \right\rbrace.
\end{equation*}
\item If $A\in\mathcal{C}(B)$, then $\Lambda_{A}:=\Gamma \left( D(A)\right)$ is a closed relation on $\K$.
\end{itemize}
In addition,
\begin{itemize}
\item If $\Lambda$ is a closed relation on $\K$, then $(A_{\Lambda})^{*}=A_{\Lambda^{*}}$.
\item If $A\in\mathcal{C}(B)$ is a symmetric operator, then $\Lambda_{A}$ is a closed symmetric relation on $\K$.
\item If $\Lambda$ is a closed symmetric relation on $\K$, then $A_{\Lambda}\in\mathcal{C}(B)$ is symmetric.
\item Let $A_{1}$ be the operator such that $D(A_{1})=Ker\;\Gamma_{1}$. Then $A_{\Lambda}$ is disjoint from $A_{1}$ if
and only if there is a densely defined closed operator $D$ on $\K$ such that $\Lambda=graph(D)$. If so, $A_{\Lambda}$ is denoted by $A_{D}$
and
\begin{equation*}
D(A_{D})=Ker\;(\Gamma_{2}-D\Gamma_{1}).
\end{equation*}
\end{itemize}
\end{theorem}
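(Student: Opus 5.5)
The plan is to build one explicit boundary triplet from the von Neumann decomposition. Then every claimed correspondence reduces to linear algebra in $\K\oplus\K$, transported by $\Gamma$.

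\textbf{Constructing the triplet.} Write
\[
D(B^{*})=D(B)\dotplus K_{+}(B)\dotplus K_{-}(B),\qquad x=x_{0}+x_{+}+x_{-}.
\]
The sum is direct and closed in the graph norm of $B^{*}$. Since $d_{+}=d_{-}$, fix a unitary $U:K_{-}(B)\to K_{+}(B)$ and put $\K:=K_{+}(B)$. Define
\[
\Gamma_{1}x:=x_{+}+Ux_{-},\qquad \Gamma_{2}x:=i\,(x_{+}-Ux_{-}).
\]
A direct computation, using $B^{*}x_{\pm}=\pm i x_{\pm}$ and the symmetry of $B$ on the $x_{0}$-parts, gives
\[
\langle B^{*}x,y\rangle-\langle x,B^{*}y\rangle=2i\left(\langle x_{+},y_{+}\rangle-\langle x_{-},y_{-}\rangle\right).
\]
Expanding $\langle\Gamma_{2}x,\Gamma_{1}y\rangle-\langle\Gamma_{1}x,\Gamma_{2}y\rangle$ gives the same quantity, which is the abstract Green identity. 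The constant $i$ in $\Gamma_{2}$ may need adjusting to match the paper's sign convention. Surjectivity holds because $(x_{+},x_{-})\mapsto(x_{+}+Ux_{-},\,i(x_{+}-Ux_{-}))$ is a bijection $K_{+}\oplus K_{-}\to\K\oplus\K$.

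\textbf{The bijection $\mathcal{C}(B)\leftrightarrow$ closed relations.} The map $\Gamma$ is bounded in the graph norm, since the decomposition above is topological. Its kernel is exactly $D(B)$. Hence $\Gamma$ induces a topological isomorphism
\[
D(B^{*})/D(B)\cong\K\oplus\K,
\]
by the open mapping theorem. Subspaces $D(B)\subseteq\mathcal{D}\subseteq D(B^{*})$ that are closed in the graph norm therefore correspond bijectively to closed subspaces $\Lambda\subseteq\K\oplus\K$, via $\mathcal{D}\mapsto\Gamma(\mathcal{D})$ and $\Lambda\mapsto\Gamma^{-1}(\Lambda)$. Closedness of $A=B^{*}|_{\mathcal{D}}$ is the same as graph-norm closedness of $\mathcal{D}$, so the first two bullets follow.

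\textbf{Adjoint formula.} This is the step I expect to be the main obstacle. Since $B\subseteq A_{\Lambda}$, we have $(A_{\Lambda})^{*}\subseteq B^{*}$, so $y\in D((A_{\Lambda})^{*})$ iff $y\in D(B^{*})$ and
\[
\langle B^{*}x,y\rangle=\langle x,B^{*}y\rangle\quad\text{for all }x\in D(A_{\Lambda}).
\]
By the Green identity this says $\langle\Gamma_{2}x,\Gamma_{1}y\rangle=\langle\Gamma_{1}x,\Gamma_{2}y\rangle$ for all $(\Gamma_{1}x,\Gamma_{2}x)\in\Lambda$. That is exactly $(\Gamma_{1}y,\Gamma_{2}y)\in\Lambda^{*}$, where $\Lambda^{*}=\{(h,k):\langle k',h\rangle=\langle h',k\rangle\ \forall (h',k')\in\Lambda\}$ is the adjoint relation. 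Surjectivity of $\Gamma$ ensures every element of $\Lambda^{*}$ is attained. Symmetry means $A\subseteq A^{*}$, which corresponds to $\Lambda\subseteq\Lambda^{*}$, so the two symmetry bullets follow at once.

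\textbf{Disjointness from $A_{1}$.} Since $D(A_{1})=Ker\,\Gamma_{1}$, we have
\[
D(A_{\Lambda})\cap D(A_{1})=D(B)\iff\Lambda\cap(\{0\}\times\K)=\{0\}.
\]
This says the multivalued part of $\Lambda$ is trivial, i.e.\ $\Lambda=graph(D)$ for a linear operator $D$. $D$ is closed because $\Lambda$ is closed. The domain formula $D(A_{D})=Ker(\Gamma_{2}-D\Gamma_{1})$ is then immediate, with the convention that $\Gamma_{1}x\in D(D)$.

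The density of $D(D)$ does not come from disjointness alone. In the paper's finite-dimensional setting every operator on $\K$ is everywhere defined, so this point is vacuous there. In general I would either interpret the bullet with that caveat or add the assumption that $A_{\Lambda}$ is transversal or self-adjoint. In the self-adjoint case, $\Lambda=\Lambda^{*}$ has multivalued part $(D(D))^{\perp}=\{0\}$, which gives density.
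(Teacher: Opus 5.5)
The paper gives no proof of this statement; it is quoted from Schm\"udgen and Malamud. Your argument is the standard one behind those citations: a von Neumann-type triplet, $\ker\Gamma=D(B)$ with $\Gamma$ continuous in the graph norm, and transport of closed subspaces and adjoints through Green's identity. It is correct up to and including the characterization that $A_\Lambda$ is disjoint from $A_1$ iff $\Lambda\cap(\{0\}\times\K)=\{0\}$ iff $\Lambda$ is the graph of a closed operator.

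Two small points on that part. Your $\Gamma_2$ already satisfies the paper's sign convention, so no adjustment is needed. In the adjoint step, the surjectivity you need is $\Gamma(D(A_\Lambda))=\Lambda$, so that the quantifier over $x\in D(A_\Lambda)$ becomes one over all of $\Lambda$; that $\Lambda^*$ is attained is not what matters. Also, the paper later applies the theorem to an arbitrary given triplet $\Pi$. The only construction-specific facts you use, $\ker\Gamma=D(B)$ and graph-norm continuity of $\Gamma$, hold for every boundary triplet, as consequences of Green's identity, surjectivity of $\Gamma$ and closedness of $B$.

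The genuine problem is your claim that the density requirement is vacuous when $\K=\C^{n}$. There, \emph{densely defined} means defined on all of $\C^{n}$, and disjointness does not force that.
\begin{itemize}
\item For $n\ge 1$, take $\Lambda=\{(0,0)\}$. This gives $A_\Lambda=B$, which lies in $\mathcal{C}(B)$ and satisfies $D(B)\cap D(A_1)=D(B)$. Yet the operator with graph $\Lambda$ has domain $\{0\}$.
\item More generally, take $\Lambda=\{(h,0):h\in\mathcal{L}\}$ with $\mathcal{L}\subsetneq\C^{n}$. This is a closed symmetric relation whose extension is disjoint from $A_1$, while the corresponding operator has domain $\mathcal{L}$.
\end{itemize}
If instead every operator on $\C^{n}$ is declared everywhere defined, the \emph{only if} direction fails on the same examples.

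So the last bullet as stated is false in the paper's finite-dimensional setting too, and no argument can supply the density. What is true is what you actually proved, without \emph{densely defined}, together with your self-adjoint refinement. If $A_\Lambda$ is self-adjoint, then $\Lambda=\Lambda^*$, so $\{k:(0,k)\in\Lambda\}$ is the orthogonal complement of $\{h:\exists k,\ (h,k)\in\Lambda\}$. Disjointness then forces $\Lambda$ to be the graph of some $D\in\Her_{n}$. That is the only case the paper uses, since it works exclusively with $A_D$, $D\in\Her_{n}$.
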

 
\section{Nevanlinna-Herglotz matrices and matrix-valued measures}

Let $\textit{B}(\R)$ be the Borel $\sigma$-algebra on $\R$ and $\C^{n\times n}$ the space of $n\times n$ matrices
on $\C$. See Section 5 of \cite{GT} for the following definition.

\begin{definition}
A matrix-valued measure $\Omega:\textit{B}(\R)\longrightarrow\C^{n\times n}$ is a function such that  
\begin{itemize}
\item[1)] For every $X\in\textit{B}(\R)$ bounded, $\Omega(X)$ is a positive-definite matrix.
\item[2)] $\Omega(\varnothing)=0$.
\item[3)] For $\left\lbrace X_{n}\right\rbrace_{n\in\N}\subset\textit{B}(\R) $ where $X_{m}\cap X_{l}=\varnothing$
for each $m\neq l$ and $\bigcup_{n\in\N}X_{n}$ is bounded 
\begin{equation*}
\Omega\left( \bigcup_{n\in\N}X_{n}\right) =\sum_{n\in\N}\Omega(X_{n}).
\end{equation*}
\end{itemize}
We define the trace of $\Omega$ as
\begin{equation}\label{medidasasoci}
Tr\Omega:=\sum_{i=1}^{n}\mu_{ii}\;where\;\mu_{ij}:=\langle e_{i}, \Omega(\cdot)e_{j}\rangle_{\C^{n}}
\end{equation}
where $\left\lbrace e_{i}\right\rbrace_{i=1}^{n}$ is the canonical basis of $\C^{n}$.
\end{definition}

We propose the following definition.
 
\begin{definition}
Let $f:\R\longrightarrow\C$ be a Borel measurable function. We define $\Phi_{f}:\C^{n}\bigoplus\C^{n}\longrightarrow\C$ by
\begin{equation*}
\Phi_{f}(a,b):=\int_{\R} f(y)d\langle a,\Omega(y)b\rangle_{\C^{n}}
\end{equation*}
where $D(\Phi_{f}):=\left\lbrace(a,b)\in\C^{n}\bigoplus\C^{n}:\int_{\R} f(y)d\langle a,\Omega(y)b\rangle_{\C^{n}}\;exists\;\right\rbrace $.
\end{definition}

The above mapping is clearly a sesquilinear form. The following lemma is elemental.

\begin{lemma}
If $f:\R\longrightarrow\R$ is a Borel measurable function, then $\Phi_{f}$ is a symmetric form on $\C^{n}$.
\end{lemma}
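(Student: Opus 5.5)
The claim is that $\Phi_f(b,a)=\overline{\Phi_f(a,b)}$ whenever $(a,b)\in D(\Phi_f)$, and that the domain is symmetric: $(a,b)\in D(\Phi_f)$ if and only if $(b,a)\in D(\Phi_f)$. The plan is to reduce everything to the scalar complex measures $\nu_{a,b}:=\langle a,\Omega(\cdot)b\rangle_{\C^{n}}$ and show that $\nu_{b,a}=\overline{\nu_{a,b}}$. After that the statement is just how integrals of a real function behave under complex conjugation of the measure.

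First, the measure identity. For every bounded Borel set $X$, the matrix $\Omega(X)$ is positive, hence Hermitian. Therefore
\begin{equation*}
\langle b,\Omega(X)a\rangle_{\C^{n}}=\langle \Omega(X)b,a\rangle_{\C^{n}}=\overline{\langle a,\Omega(X)b\rangle_{\C^{n}}}.
\end{equation*}
So $\nu_{b,a}$ and $\overline{\nu_{a,b}}$ agree on bounded Borel sets. Both are complex measures defined through the countable additivity in item 3) of the definition, so they agree as (locally finite) measures.

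Next, the integrals. Write $\nu_{a,b}=\nu_1-\nu_2+i(\nu_3-\nu_4)$, using the Jordan decompositions of the real and imaginary parts; these are defined on each bounded interval, and hence globally as locally finite positive measures. Then $\overline{\nu_{a,b}}=\nu_1-\nu_2-i(\nu_3-\nu_4)$. Since $f$ is real-valued, $\int f\,d\nu_{a,b}$ exists exactly when each $\int f\,d\nu_k$ exists in the appropriate sense. The same integrability conditions govern $\int f\,d\overline{\nu_{a,b}}$. Hence $(a,b)\in D(\Phi_f)$ if and only if $(b,a)\in D(\Phi_f)$, and in that case
\begin{equation*}
\Phi_f(b,a)=\int_{\R} f\,d\overline{\nu_{a,b}}=\overline{\int_{\R} f\,d\nu_{a,b}}=\overline{\Phi_f(a,b)},
\end{equation*}
where the middle equality uses $\bar f=f$.

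The only point needing care is the meaning of "the integral exists" for a locally finite complex measure on $\R$. I would fix the convention that existence means absolute integrability with respect to the total variation $|\nu_{a,b}|$, or equivalently with respect to each $\nu_k$. Under this convention the symmetry of the domain is immediate, because $|\overline{\nu}|=|\nu|$. If instead an improper or limit sense over bounded sets is intended, the same conjugation argument goes through one bounded set at a time before passing to the limit. Sesquilinearity was already noted in the paper, so with this the form is symmetric.
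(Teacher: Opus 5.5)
Your proof is correct, and it is exactly the routine verification the paper has in mind when it calls this lemma elemental and omits the proof: since $\Omega(X)$ is Hermitian, $\nu_{b,a}=\overline{\nu_{a,b}}$, and integrating the real-valued $f$ against the conjugate measure conjugates the integral. You also fix what it means for the integral to exist (absolute integrability with respect to $|\nu_{a,b}|$, so that $D(\Phi_{f})$ is symmetric because $|\overline{\nu}|=|\nu|$), which usefully settles a point the paper's definition of $D(\Phi_{f})$ leaves vague.
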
 

We propose the following definition.

\begin{definition}
If $f:\R\longrightarrow\C$ is a Borel measurable function, we define the expression $\int_{\R} fd\Omega$ as the matrix associated with the form $\Phi_{f}$.
\end{definition}

\begin{remark}
\begin{itemize}
\item[1)] If $f$ is real-valued, then $\int_{\R} fd\Omega$ is a symmetric matrix.
\item[2)] If $f$ is positive-valued, then $\int_{\R} fd\Omega$ is a positive-definite matrix.
\item[3)] The notation $\int_{\R} fd\Omega\in\C^{n\times n}$ means that the matrix exists, that is, the form $\Phi_{f}$ is defined everywhere in $\C^{n}$.
\end{itemize}
\end{remark}

\begin{definition}
A holomorphic function $M:\C\setminus\R\longrightarrow\C^{n\times n}$ is said to be a Nevanlinna-Herglotz matrix if
\begin{center}
$Imz\cdot Im\;M(z)>0$ and $M(\overline{z})=M(z)^{*}$ for all $z\in\C\setminus\R$
\end{center}
where $Im\;M(z):=\frac{1}{2i}\left( M(z)-M(z)^{*}\right) $.
\end{definition}

The following proposition is trivial.

\begin{prop}\label{me}
If $\left\lbrace t_{n}\right\rbrace_{n\in\N} $ is a sequence of sesquilinear forms on a finite-dimensional Hilbert space $\H$ such that $t_{n}\longrightarrow_{n\rightarrow\infty} t$ 
where $t$ is a sesquilinear form on $\H$, then $A_{n}\longrightarrow A$
where $\left\lbrace A_{n}\right\rbrace_{n\in\N} $ and $A$ are the associated operators.
%Conversely, if $A_{n}\longrightarrow_{w} A$, then $t_{n}\longrightarrow t$ where $\left\lbrace t_{n}\right\rbrace_{n\in\N} $ and $t$ are the sesquilinear forms of $\left\lbrace A_{n}\right\rbrace_{n\in\N} $ and $A$ respectively.
\end{prop}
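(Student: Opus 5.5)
The plan is to pass from forms to matrices through a fixed orthonormal basis. Since $\H$ is finite-dimensional, every linear operator on it is determined by finitely many scalars. Fix an orthonormal basis $\{e_{i}\}_{i=1}^{m}$ of $\H$.

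For each sesquilinear form $s$ on $\H$, let $S$ denote its associated operator. It is characterized by
\begin{equation*}
s(x,y)=\langle x,Sy\rangle \quad\text{for all } x,y\in\H .
\end{equation*}
Its matrix entries are therefore $\langle e_{i},Se_{j}\rangle=s(e_{i},e_{j})$. Existence and uniqueness of $S$ follow from the Riesz representation theorem applied to $x\mapsto\overline{s(x,y)}$ for each fixed $y$, or directly from the matrix entries above.

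I read the hypothesis $t_{n}\longrightarrow t$ as pointwise convergence: $t_{n}(x,y)\to t(x,y)$ for every pair $x,y$. Any stronger notion, such as uniform convergence on the unit ball, also implies this.

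Under this reading, every entry $\langle e_{i},A_{n}e_{j}\rangle=t_{n}(e_{i},e_{j})$ converges to $\langle e_{i},Ae_{j}\rangle=t(e_{i},e_{j})$. On a finite-dimensional space all operator topologies coincide with entrywise convergence. Concretely,
\begin{equation*}
\|A_{n}-A\|\le\Bigl(\sum_{i,j}|t_{n}(e_{i},e_{j})-t(e_{i},e_{j})|^{2}\Bigr)^{1/2}\longrightarrow 0,
\end{equation*}
since the Hilbert--Schmidt norm dominates the operator norm. So $A_{n}\to A$ in norm, and a fortiori strongly and weakly.

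There is no real obstacle. The only point needing care is to fix what "$t_{n}\to t$" means and to note that only the finitely many values on basis pairs are needed. Finite dimensionality then turns pointwise convergence into norm convergence.
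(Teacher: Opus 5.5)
Your argument is correct. The paper gives no proof at all (it calls the proposition trivial), and your reduction to the entries $t_{n}(e_{i},e_{j})$ in a fixed orthonormal basis, followed by the Hilbert--Schmidt bound, is exactly the routine verification left to the reader. One small remark: where the paper applies this result (in the proof of Proposition \ref{implicaciongrl}), it only has convergence of the quadratic forms $\Phi_{x+i\varepsilon}[c]$, so you would first invoke polarization, which holds for arbitrary complex sesquilinear forms, to get convergence of each $t_{n}(e_{i},e_{j})$; after that your proof applies verbatim.
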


We obtain the following generalization of \cite[Proposition 3.1]{Ruiz} 

\begin{prop}\label{implicaciongrl}
Let $\Omega$ be a matrix-valued measure and $x\in\R$ such that
\begin{equation*}
\int_{\R} \frac{d\Omega(y)}{1+y^{2}},\;T(x)\in\C^{n\times n}
\end{equation*}
where $T(x):=\int_{\R} \frac{d\Omega(y)}{(x-y)^{2}}$. Then
\begin{equation*}
M(x+i0):=\lim_{\varepsilon\longrightarrow 0}M(x+i\varepsilon)\in\C^{n\times n}\;and\;M(x+i0)=M(x+i0)^{*}
\end{equation*}
where $M$ is the Nevanlinna-Herglotz matrix associated with $\Omega$ such that
\begin{equation*}
\lim_{\varepsilon\longrightarrow \infty}\frac{M(x+i\varepsilon)}{i\varepsilon}=0.
\end{equation*}
\end{prop}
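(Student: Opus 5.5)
Via the integral representation of $M$, the statement reduces entrywise to the scalar case \cite[Proposition 3.1]{Ruiz}. The plan is to polarize down to scalar measures, apply that result to each of them, and reassemble the limits into a matrix using Proposition \ref{me}.

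\textbf{Representation.} The normalization $\lim_{\varepsilon\to\infty}M(x+i\varepsilon)/(i\varepsilon)=0$ kills the linear term in the Herglotz representation. So
\begin{equation*}
M(z)=C+\int_{\R}\left(\frac{1}{y-z}-\frac{y}{1+y^{2}}\right)d\Omega(y),\qquad C=C^{*},
\end{equation*}
where the integral is understood in the form sense of Section 3.

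\textbf{Reduction to scalar measures.} Fix $a\in\C^{n}$ and consider $\mu_{a}:=\langle a,\Omega(\cdot)a\rangle_{\C^{n}}$, which is a positive scalar measure. By hypothesis,
\begin{equation*}
\int_{\R} \frac{d\mu_{a}}{1+y^{2}}<\infty \quad\text{and}\quad \int_{\R}\frac{d\mu_{a}}{(x-y)^{2}}<\infty,
\end{equation*}
since these are the quadratic forms of the two matrices assumed to lie in $\C^{n\times n}$. Hence \cite[Proposition 3.1]{Ruiz} applies to the scalar Herglotz function
\begin{equation*}
m_{a}(z):=\langle a,M(z)a\rangle=\langle a,Ca\rangle+\int\left(\frac{1}{y-z}-\frac{y}{1+y^{2}}\right)d\mu_{a}(y).
\end{equation*}
It gives that $m_{a}(x+i0)$ exists and is real.

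If one prefers not to quote the scalar result, it can be redone directly by dominated convergence. We have
\begin{equation*}
\left|\frac{1}{y-x-i\varepsilon}\right|\le\frac{1}{|y-x|},
\end{equation*}
and $1/|y-x|$ is $\mu_{a}$-integrable near $x$ (by $T(x)$, via Cauchy--Schwarz on a bounded neighbourhood, using that $\mu_a$ is finite on bounded sets). Away from $x$, the integrand is dominated by $c/(1+y^{2})$ uniformly for $\varepsilon\le 1$. The limit of the integrand is $1/(y-x)-y/(1+y^{2})$, which is real. One also notes that $\mu_a(\{x\})=0$, forced by $T(x)<\infty$. This domination argument, especially the integrability of $1/|y-x|$ near $x$ and controlling the subtraction term at infinity uniformly in $\varepsilon$, is the only place needing care, and it is where I expect the main (mild) obstacle.

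\textbf{Polarization and conclusion.} The sesquilinear forms
\begin{equation*}
t_{\varepsilon}(a,b):=\langle a,M(x+i\varepsilon)b\rangle
\end{equation*}
are recovered from the quadratic values $m_{a\pm b}$ and $m_{a\pm ib}$ by polarization. Therefore $t_{\varepsilon}$ converges as $\varepsilon\to0$ to a sesquilinear form $t$ on $\C^{n}$. By Proposition \ref{me}, $M(x+i\varepsilon)$ converges to the matrix $M(x+i0)$ associated with $t$, so $M(x+i0)\in\C^{n\times n}$. Finally, $\langle a,M(x+i0)a\rangle=\lim m_{a}(x+i\varepsilon)\in\R$ for every $a$, and a matrix whose quadratic form is real is self-adjoint. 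Hence $M(x+i0)=M(x+i0)^{*}$.
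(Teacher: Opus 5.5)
Your proposal is correct and follows essentially the same route as the paper: the Herglotz representation with the linear term removed by the normalization, reduction to the scalar measures $\langle a,\Omega(\cdot)a\rangle_{\C^{n}}$ so that the scalar result \cite[Proposition 3.1]{Ruiz} applies, and Proposition \ref{me} to pass from convergence of forms to convergence of the matrices $M(x+i\varepsilon)$. Keeping $C$ explicitly rather than setting $C=0$, spelling out the polarization step, and adding the optional dominated-convergence check only make explicit details that the paper leaves implicit.
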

\begin{proof}
By Canonical Representation Theorem of a Nevanlinna-Herglotz matrix (see \cite[Theorem 5.4(iv)]{GT})
\begin{equation}\label{funcionM}
M(z)=C+\int_{\R} \left( \frac{1}{y-z}-\frac{y}{1+y^{2}}\right) d\Omega(y)
\end{equation}
where $C\in\C^{n\times n}$ is a symmetric matrix. We are going to assume, without loss of generality, that $C=0$.
Let $\Phi$ be and $\Phi_{x}$ quadratic forms of matrices $\int_{\R} \frac{d\Omega(y)}{1+y^{2}}$
and $T(x)$ respectively. Therefore, for every $c\in\C^{n}$
\begin{equation*}
\int_{\R} \frac{1}{1+y^{2}}d\langle c,\Omega(y)c\rangle_{\C^{n}}=:\Phi\left[ c\right] <\infty.
\end{equation*}
So, $m_{c}(z):=\int_{\R} \left( \frac{1}{y-z}-\frac{y}{1+y^{2}}\right)d\langle c,\Omega(y)c\rangle_{\C^{n}}$ exists
for every $c\in\C^{n}$. That is, the quadratic form of the Nevanlinna-Herglotz matrix 
\begin{equation*}
M(z)=\int_{\R} \left( \frac{1}{y-z}-\frac{y}{1+y^{2}}\right) d\Omega(y)
\end{equation*}
which is given by $\Phi_{z}\left[ c\right] :=m_{c}(z)$ is defined everywhere in $\C^{n}$. Moreover, $\Phi_{x}$ is
defined everywhere in $\C^{n}$ since $T(x)$ is a matrix. This says that, for each $c\in\C^{n}$
\begin{equation*}
\int_{\R} \frac{1}{(x-y)^{2}}d\langle c,\Omega(y)c\rangle_{\C^{n}}=\Phi_{x}\left[ c\right] <\infty.
\end{equation*}
By \cite[Proposition 3.1]{Ruiz},
\begin{equation*}
\lim_{\varepsilon\longrightarrow 0}\Phi_{x+i\varepsilon}[c]=\lim_{\varepsilon\longrightarrow 0}m_{c}(x+i\varepsilon)
=m_{c}(x+i0)\in\R.
\end{equation*}
That is, 
\begin{equation*}
\Phi_{x+i\varepsilon}\longrightarrow_{\varepsilon\longrightarrow 0}\Phi_{x+i0}.
\end{equation*}
By Proposition \ref{me}, it turns out that
\begin{equation*}
\lim_{\varepsilon\longrightarrow 0}M(x+i\varepsilon)=\int_{\R} \left( \frac{1}{y-x}-\frac{y}{1+y^{2}}\right) d\Omega(y).
\end{equation*}
Furthermore, $M(x+i0)$ is a symmetric matrix since the function $g(y):=\frac{1}{y-x}-\frac{y}{1+y^{2}}$ is real-valued. Thus, we conclude the result.

\end{proof}

The proof of the following result follows the same approach as \cite[Theorem 2.1]{RMS}.

\begin{theorem}\label{teo2.1gral}
Suppose that $\Omega$ is a matrix-valued measure such that
\begin{equation*}
\int_{\R} \frac{d\Omega(y)}{1+y^{2}}\in\C^{n\times n}.
\end{equation*}
Then
\begin{equation}\label{set}
\left\lbrace x\in supp\;\Omega:T(x)\not\in \C^{n\times n}\right\rbrace 
\end{equation}
where $T(x):=\int_{\R} \frac{d\Omega(y)}{(x-y)^{2}}$, is dense $\textit{G}_{\delta}$ in $supp\;\Omega$.
\end{theorem}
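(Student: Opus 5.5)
The plan is to reduce the matrix statement to the scalar one through the trace measure $Tr\Omega$, and then follow the Baire category argument of \cite[Theorem 2.1]{RMS}.

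\textbf{Step 1: reduction to the trace measure.} For $c\in\C^{n}$ the scalar measure $\langle c,\Omega(\cdot)c\rangle$ is absolutely continuous with respect to $\nu:=Tr\Omega$. Since $\Omega(X)$ is positive, its diagonal entries dominate the off-diagonal ones, and each $\langle e_{i},\Omega(X)e_{i}\rangle\le\nu(X)$. Consequently $supp\;\Omega=supp\;\nu$. Moreover, by finite-dimensionality and positivity, $T(x)\in\C^{n\times n}$ holds if and only if $\Phi_{x}[e_{i}]<\infty$ for all $i$. Indeed, the quadratic form of a positive form is finite on the whole space once it is finite on a basis, by the Cauchy--Schwarz inequality for the form. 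This in turn holds if and only if $\int\frac{d\nu(y)}{(x-y)^{2}}<\infty$. Likewise, $\int\frac{d\nu(y)}{1+y^{2}}<\infty$ follows from the hypothesis. So the set \eqref{set} equals
\[
S:=\left\lbrace x\in supp\;\nu:\int_{\R}\frac{d\nu(y)}{(x-y)^{2}}=\infty\right\rbrace ,
\]
and it suffices to prove the scalar statement for the positive Borel measure $\nu$.

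\textbf{Step 2: $G_{\delta}$.} Put $g(x):=\int\frac{d\nu(y)}{(x-y)^{2}}\in[0,\infty]$. By monotone convergence $g=\sup_{k}g_{k}$, where
\[
g_{k}(x):=\int\min\left\lbrace k,\frac{1}{(x-y)^{2}}\right\rbrace\frac{(1+k^{-2}y^{2})^{-1}}{1}\,d\nu(y).
\]
The damping factor is chosen so that integrability at infinity follows from $\int\frac{d\nu}{1+y^{2}}<\infty$. Each $g_{k}$ is continuous by dominated convergence, the integrand being bounded by $C_{k}/(1+y^{2})$. Hence $g$ is lower semicontinuous, and
\[
S=\bigcap_{m}\left\lbrace x\in supp\;\nu:g(x)>m\right\rbrace
\]
is a countable intersection of relatively open subsets of $supp\;\nu$.

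\textbf{Step 3: density.} This is the main point. First, every atom $x_{0}$ of $\nu$ lies in $S$. Next, if $x\in supp\;\nu$ is not an atom, then every interval $(x-\delta,x+\delta)$ has positive measure. If $\nu$ restricted to this interval has an atom, that atom lies in $S$. Otherwise the restriction is continuous, and I would use the standard fact that $g=\infty$ $\nu$-a.e. for such a measure. To see this, note that
\[
\int_{I}g\,d\nu=\iint_{I\times I}\frac{d\nu(x)\,d\nu(y)}{(x-y)^{2}}+\text{(finite part)},
\]
and for a continuous measure the double integral diverges locally wherever mass is present. More carefully, I would cite the Simon--Wolff type lemma: for a finite measure $\mu$, the set where $\int\frac{d\mu(y)}{(x-y)^{2}}<\infty$ has $\mu$-measure only from its pure point part. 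Equivalently, if $\{g<\infty\}$ had positive $\mu$-measure, then the restriction of $\mu$ there would have a derivative with respect to Lebesgue measure equal to zero, hence would be singular, and a de la Vall\'ee Poussin/Besicovitch argument gives a contradiction.

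The cleanest route, which is what I expect \cite{RMS} does, avoids this measure theory entirely. Baire's theorem applies since $supp\;\nu$ is closed and hence complete. Each open set $U_{m}=\{g>m\}\cap supp\;\nu$ is dense: given $x\in supp\;\nu$ with $g(x)<\infty$, pick nearby points of the support. If all points of $supp\;\nu\cap(x-\delta,x+\delta)$ had $g\le m$, then integrating $g$ against $\nu$ over the interval (Fubini/Tonelli) would give
\[
\iint\frac{d\nu\,d\nu}{(x-y)^{2}}\le m\,\nu(I)<\infty
\]
on $I\times I$. But the diagonal contribution forces divergence unless $\nu(I)=0$. To verify this I will split $I$ into $N$ subintervals of length $|I|/N$ and use Cauchy--Schwarz:
\[
\sum_{j}\nu(I_{j})^{2}\,\frac{N^{2}}{|I|^{2}}\ge\frac{N}{|I|^{2}}\,\nu(I)^{2}\longrightarrow\infty .
\]
This is a contradiction, so $U_{m}$ meets every neighborhood, and $S$ is a dense $G_{\delta}$ in $supp\;\Omega$.
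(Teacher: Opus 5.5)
Your proof is correct, but your density argument takes a genuinely different route from the paper's.

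The paper argues through $M(z)=\int_{\R}\bigl(\frac{1}{y-z}-\frac{y}{1+y^{2}}\bigr)d\Omega(y)$. Suppose $T$ were finite on an open interval $S$ meeting $supp\;\Omega$. Then Proposition~\ref{implicaciongrl} gives a self-adjoint boundary value $M(x+i0)$ at every $x\in S$. The Gesztesy--Tsekanovskii support theorem (Theorem 6.1 of [GT]) for the absolutely continuous and singular parts then forces $\Omega(S)=0$. So $\{x\in supp\;\Omega:T(x)\in\C^{n\times n}\}$ has empty relative interior, and density follows directly, without Baire.

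You instead reduce at once to the scalar measure $\nu=Tr\Omega$, which shows that the matrix structure plays no role here. You then get density from Baire's theorem applied to the relatively open sets $U_{m}=\{g>m\}\cap supp\;\nu$. Their density rests on the elementary fact that $\iint_{I\times I}\frac{d\nu(x)\,d\nu(y)}{(x-y)^{2}}=\infty$ whenever $\nu(I)>0$. Your partition and Cauchy--Schwarz estimate proves this correctly. Tonelli, $\nu(I)<\infty$ for bounded $I$, and $\nu(\R\setminus supp\;\nu)=0$ then finish the argument.

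Your route is self-contained: it needs no boundary values, no Herglotz representation, and no de la Vall\'ee Poussin-type description of where the a.c.\ and singular parts live. The price is that it depends on the openness of $U_{m}$ from Step 2 and on Baire's theorem. The paper's argument is shorter given its cited results, and it reuses the Weyl-function machinery it needs anyway for Malamud's theorem.

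Your $G_{\delta}$ step is essentially the paper's. Both establish lower semicontinuity via continuous regularizations; your truncated, damped kernel plays the role of $\frac{1}{(x-y)^{2}+m^{-2}}$ applied to the diagonal measures $\mu_{ii}$.

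Finally, drop the paragraph invoking a Simon--Wolff type lemma. As stated it is false: atoms lie in $\{g=\infty\}$, and the correct statement is that $\{g<\infty\}$ is $\mu$-null. Your final argument does not use it.
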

\begin{proof}
Consider the Nevanlinna-Herglotz matrix $M$ given by
\begin{equation*}
M(z):=\int_{\R} \left( \frac{1}{y-z}-\frac{y}{1+y^{2}}\right) d\Omega(y).
\end{equation*}

We assert that

\begin{equation*}
Int_{supp\;\Omega}\left\lbrace x\in supp\;\Omega :T(x)\in\C^{n\times n}\right\rbrace =\varnothing.
\end{equation*} 

Suppose the above set contains an open interval $S$. This means that each $x\in S$ satisfies that
$T(x)\in\C^{n\times n}$. Since $\int_{\R} \frac{d\Omega(y)}{1+y^{2}}\in\C^{n\times n}$, by
Proposition \ref{implicaciongrl}, we obtain that all $x\in S$ satisfies that
\begin{equation*}
M(x+i0)=\lim_{\varepsilon\longrightarrow 0}M(x+i\varepsilon)\in\C^{n\times n}\;and\;M(x+i0)=M(x+i0)^{*}.
\end{equation*}
We note that for all $x\in S$ the following occurs:
\begin{itemize}
\item Since $Im\,M(x+i0)=0$, $rank\;Im\,M(x+i0)=0$. Thus, $S\cap\Omega_{ac}=\varnothing$.
\item Due to that both the function $f(z):=Im\;z$ and the trace operator are continuous, for each $x\in S$
\begin{equation*}
\lim_{\varepsilon\longrightarrow 0}Im\;Tr\;M(x+i\varepsilon)=Im\;Tr\;M(x+i0)=0.
\end{equation*}
The above is because $M(x+i 0)$ is symmetric, so its main diagonal is real. Therefore, $S\cap\Omega_{s}=\varnothing$.
\end{itemize}
By \cite[Theorem 6.1]{GT}, $\Omega(S)=\Omega_{ac}(S)+\Omega_{s}(S)=0$. Then $\R\setminus S$
is a closed support for $\Omega$, that is, $supp\;\Omega\subseteq\R\setminus S$ which is equivalent to
$supp\;\Omega\cap S=\varnothing$. Then, we conclude that the set (\ref{set}) is dense
in $supp\;\Omega$.\\

Now, we assert that the set (\ref{set}) is $\textit{G}_{\delta}$ in $supp\;\Omega$. We note that
\begin{align*}
T(x)\in\C^{n\times n} &\Longleftrightarrow  \forall\;c\in\C^{n},\;\int_{\R}\frac{1}{(x-y)^{2}}d\langle c,\Omega(y)c\rangle_{\C^{n}}<\infty \\
&\Longleftrightarrow  \forall\;i=1,...,n,\;\int_{\R}\frac{1}{(x-y)^{2}}d\langle e_{i},\Omega(y)e_{i}\rangle_{\C^{n}}<\infty.
\end{align*}
  
Therefore,
\begin{equation*}
\left\lbrace x\in\R:T(x)\not\in \C^{n\times n}\right\rbrace=\bigcup_{i=1}^{n}W_{i}
\end{equation*}
where 
\begin{equation*}
W_{i}:=\left\lbrace x\in\R:\int_{\R}\frac{1}{(x-y)^{2}}d\langle e_{i},\Omega(y)e_{i}\rangle_{\C^{n}}=\infty\right\rbrace.
\end{equation*}

%Since the measure $\langle c,\Omega(\cdot)c\rangle_{\C^{n}}$ satisfies \ref{grande} for every , by
%Lemma \ref{lemaG}
By \cite[Lemma 3.2]{Ruiz}, for all $c\in\C^{n}$
\begin{equation*}
\int_{\R}\frac{1}{(x-y)^{2}}d\langle c,\Omega(y)c\rangle_{\C^{n}}=\lim_{m\longrightarrow\infty}\int_{\R}\frac{1}{(x-y)^{2}+\frac{1}{m^{2}}}d\langle c,\Omega(y)c\rangle_{\C^{n}}.
\end{equation*}
Thus,
\begin{align*}
W_{i}&=\left\lbrace x\in\R:\forall\;k\in\N\;\exists\;m\in\N\;such\;that\;\int_{\R}\frac{1}{(x-y)^{2}+\frac{1}{m^{2}}}d\langle e_{i},\Omega(y)e_{i}\rangle_{\C^{n}}>k\right\rbrace \\
&=\bigcap_{k\in\N}\bigcup_{m\in\N}\left\lbrace x\in\R:\int_{\R}\frac{1}{(x-y)^{2}+\frac{1}{m^{2}}}d\langle e_{i},\Omega(y)e_{i}\rangle_{\C^{n}}>k\right\rbrace 
\end{align*}
where the set between brackets is open since the respective function is continuous by \cite[Lemma 3.2]{Ruiz}. So,
\begin{align*}
\bigcup_{i=1}^{n}W_{i}&=\bigcup_{i=1}^{n}\bigcap_{k\in\N}\bigcup_{m\in\N}\left\lbrace x\in\R:\int_{\R}\frac{1}{(x-y)^{2}+\frac{1}{m^{2}}}d\langle e_{i},\Omega(y)e_{i}\rangle_{\C^{n}}>k\right\rbrace \\
&=\bigcap_{k\in\N}\bigcup_{i=1}^{n}\bigcup_{m\in\N}\left\lbrace x\in\R:\int_{\R}\frac{1}{(x-y)^{2}+\frac{1}{m^{2}}}
d\langle e_{i},\Omega(y)e_{i}\rangle_{\C^{n}}>k\right\rbrace
\end{align*}
Finally we consider intersection with $supp\;\Omega$ and conclude that (\ref{set}) is $\textit{G}_{\delta}$ in $supp\;\Omega$.
\end{proof}

\section{Kats theorem and multiplicity function}
%-----------------------------------------------------------------------------------
We state the following results.

\begin{theorem}[Theorem 4.3(i), \cite{MALAMUD2} and Theorem 7.5.1, \cite{SOLBIR}]\label{teoespectral}
Let $A$ be a self-adjoint operator on $\H$ and $\rho$ a scalar Borel measure such that $\E_{A}$ is equivalent to $\rho$. Then 
$A$ is unitarily equivalent to the operator $Q$ on $\int_{\R}\oplus G(t)d\rho(t)$
such that
\begin{equation*}
D(Q):=\left\lbrace f\in \int_{\R}\oplus G(t)d\rho(t):Qf\in \int_{\R}\oplus G(t)d\rho(t)\right\rbrace 
\end{equation*}
and $Qf(t):=tf(t)$.
\end{theorem}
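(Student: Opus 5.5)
The plan is to derive the statement from the Hahn–Hellinger multiplicity theory, i.e.\ the spectral theorem in multiplicity form. We assume $\H$ is separable, which is implicit in the setting since $B$ has finite deficiency indices and is completely non-selfadjoint.

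\textbf{Step 1: a spectral measure and a measure of maximal type.} Since $\H$ is separable, there is a countable family of vectors $\{h_k\}_{k\in\N}$ with $\overline{\mathrm{span}}\{\E_A(X)h_k\}=\H$. Define
\begin{equation*}
\rho:=\sum_k 2^{-k}\langle \E_A(\cdot)h_k,h_k\rangle ,
\end{equation*}
normalizing $\|h_k\|\le 1$. This $\rho$ is a finite measure with $\rho(X)=0$ iff $\E_A(X)=0$. The hypothesis of the theorem already grants such a $\rho$, so this step only needs care insofar as the construction is replaced by the given $\rho$. Consider the cyclic decomposition
\begin{equation*}
\H=\bigoplus_k \H_k,\qquad \H_k=\overline{\mathrm{span}}\{\E_A(X)g_k\},
\end{equation*}
obtained by Gram–Schmidt-type orthogonalization of cyclic subspaces. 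On each $\H_k$, $A$ is unitarily equivalent to multiplication by $t$ on $L^2(\R,\mu_k)$, with $\mu_k=\langle\E_A(\cdot)g_k,g_k\rangle\ll\rho$.

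\textbf{Step 2: ordering the decomposition.} Using the standard Hahn–Hellinger argument, rearrange the decomposition so that the measures are ordered, $\mu_1\gg\mu_2\gg\cdots$, with $\mu_1\sim\rho$. This is done by choosing at each stage a vector of maximal spectral type in the orthogonal complement; such a vector exists by the same construction as in Step 1 applied to the complement. Writing $\mu_k=\chi_{\Delta_k}\rho$ up to equivalence gives nested Borel sets $\R=\Delta_1\supseteq\Delta_2\supseteq\cdots$. By Radon–Nikodym, $L^2(\mu_k)$ is unitarily identified with $L^2(\Delta_k,\rho)$ via $f\mapsto f\sqrt{d\mu_k/d\rho}$, and this identification commutes with multiplication by $t$.

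\textbf{Step 3: building the direct integral.} Define the multiplicity function $N(t):=\#\{k:t\in\Delta_k\}$. Fix an orthonormal basis $\{e_k\}$ of $\ell^2$ and set $G(t):=\overline{\mathrm{span}}\{e_k:k\le N(t)\}$; this is a measurable field of Hilbert spaces. The map
\begin{equation*}
(f_k)_k\mapsto \Bigl(t\mapsto \sum_k f_k(t)e_k\Bigr)
\end{equation*}
is unitary from $\bigoplus_k L^2(\Delta_k,\rho)$ onto $\int_\R\oplus G(t)\,d\rho(t)$. It intertwines the direct sum of the multiplication operators with $Q$.

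\textbf{Step 4: domains.} Check that the domain of the direct sum of multiplication operators,
\begin{equation*}
\Bigl\{(f_k):\sum_k\int|tf_k|^2\,d\rho<\infty\Bigr\},
\end{equation*}
corresponds exactly to $D(Q)$ as defined in the statement. This follows by Tonelli, since
\begin{equation*}
\|tf(t)\|_{G(t)}^2=\sum_k|tf_k(t)|^2 .
\end{equation*}

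\textbf{Main obstacle.} The expected difficulty is the ordering step, Step 2. Producing the nested $\Delta_k$ requires showing that any self-adjoint operator on a separable space contains a vector of maximal spectral type, and that after splitting off its cyclic subspace one can iterate while retaining completeness. To secure completeness, the plan is to interleave the choices with a fixed dense sequence, so that the $n$-th dense vector lies in $\bigoplus_{k\le n}\H_k$ up to the cyclic closure at each step.
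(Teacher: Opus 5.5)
The paper gives no proof of this statement; it quotes it from Birman--Solomjak and Malamud--Malamud. Your outline is the standard Hahn--Hellinger argument behind those results, and Steps 1, 3 and 4 are fine.

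\textbf{The gap in Step 2.} The step you did not carry out, completeness in Step 2, is a genuine gap as written. Choosing greedily a vector of maximal type in each successive complement gives $\mu_1\gg\mu_2\gg\cdots$ automatically, but it need not exhaust $\H$. For instance, take $\H=\bigoplus_{j\ge 0}L^{2}([0,1])$ with $A$ multiplication by $t$ in every summand. The constant $1$ placed in the $k$-th summand, $k=1,2,\dots$, is at each stage of maximal type in the current complement, yet the $0$-th summand is never reached.

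Your interleaving remedy works only with the following lemma, which you have not stated. Write $\mu_h=\langle\E_A(\cdot)h,h\rangle$ and $\H_h$ for the cyclic subspace of $h$. \emph{If $\H'$ reduces $A$ and $h\in\H'$, then there is $g\in\H'$ of maximal type for $A$ restricted to $\H'$ with $h\in\H_g$.}

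Proof of the lemma. Take $g'$ of maximal type in $\H'\ominus\H_h$, with type $\nu=\nu_a+\nu_s$, where $\nu_a\ll\mu_h$ and $\nu_s\perp\mu_h$. Choose a Borel set $S$ with $\mu_h(S)=0$ and $\nu_s(\R\setminus S)=0$, and put $g=h+\E_A(S)g'$. Since $h$ and $\E_A(S)g'$ lie in orthogonal reducing subspaces, $\mu_g=\mu_h+\nu_s$. This is equivalent to $\mu_h+\nu$, the maximal type on $\H'$. Moreover $\E_A(\R\setminus S)g=h$, so $h\in\H_g$.

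Apply the lemma at stage $n$ to the projection of the $n$-th dense vector onto $\H\ominus\bigoplus_{k<n}\H_k$; this gives both the ordering and completeness. Note also that Step 1 produces a \emph{measure} of maximal type, not a vector. A \emph{vector} of maximal type is $\sum_k 2^{-k}h_k/\|h_k\|$ for an orthogonal cyclic decomposition $\bigoplus_k\H_{h_k}$.

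\textbf{The ordering is not needed.} This statement does not require nested sets at all. Take any complete orthogonal cyclic decomposition $\H=\bigoplus_k\H_{h_k}$ (dense sequence plus Gram--Schmidt). Let $\Delta_k=\{d\mu_{h_k}/d\rho>0\}$, which need not be nested. Put $G(t):=\overline{\mathrm{span}}\{e_k:t\in\Delta_k\}\subseteq\ell^2$, a measurable family of subspaces of a fixed space.

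Your map $(f_k)_k\mapsto\bigl(t\mapsto\sum_k f_k(t)e_k\bigr)$ is then unitary from $\bigoplus_k L^2(\Delta_k,\rho)$ onto $\int_\R\oplus G(t)\,d\rho(t)$ and intertwines with $Q$. Your Step 4 goes through verbatim, with $\dim G(t)=\#\{k:t\in\Delta_k\}$. This removes your main obstacle entirely. The ordered, canonical form only matters for uniqueness, which the paper handles separately in Theorem \ref{teoespectralii}.
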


\begin{theorem}[Theorem 4.3(ii), \cite{MALAMUD2}]\label{teoespectralii}
Let $Q_{i}$ be the operator on $\int_{\R}\oplus G_{i}(t)d\rho_{i}(t)$ such that
\begin{equation*}
D(Q_{i}):=\left\lbrace f\in \int_{\R}\oplus G_{i}(t)d\rho_{i}(t):Q_{i}f\in \int_{\R}\oplus G_{i}(t)d\rho_{i}(t)\right\rbrace 
\end{equation*}
and $Q_{i}f(t):=tf(t)$ with $i=1,2$. Then $Q_{1}$ is unitarily equivalent to $Q_{2}$ if and
only if $\rho_{1}$ is equivalent to $\rho_{2}$ and $dim\;G_{1}(t)=dim\;G_{2}(t)$, $\rho_{i}$-a.e. on $\R$.
\end{theorem}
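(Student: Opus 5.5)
The plan is to reduce Theorem \ref{teoespectralii} to the classical Hahn--Hellinger classification of self-adjoint operators: the spectral type together with the multiplicity function is a complete set of unitary invariants. Throughout, write $\mathcal{G}_{i}=\int_{\R}\oplus G_{i}(t)d\rho_{i}(t)$. Measurability of $t\mapsto dim\;G_{i}(t)$ is implicit in the direct-integral structure, and all spaces are separable.

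\emph{Sufficiency.} Assume $\rho_{1}\sim\rho_{2}$ and $dim\;G_{1}(t)=dim\;G_{2}(t)$ $\rho_{i}$-a.e. Set $h:=d\rho_{1}/d\rho_{2}$, which is positive and finite $\rho_{2}$-a.e. Choose a measurable family of unitaries $U(t):G_{2}(t)\to G_{1}(t)$; this is possible because the fibre dimensions agree, and one can take measurable orthonormal frames in each field. Then define
\begin{equation*}
(Vf)(t):=h(t)^{-1/2}U(t)f(t),\qquad f\in\mathcal{G}_{1}.
\end{equation*}
$V$ is isometric, since $\int\|Vf\|^{2}d\rho_{1}=\int\|f\|^{2}h^{-1}\,h\,d\rho_{2}$, and it is onto, because its inverse is built the same way. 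Moreover $V$ commutes with multiplication by $t$, so it maps $D(Q_{1})$ onto $D(Q_{2})$ and satisfies $VQ_{1}=Q_{2}V$.

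\emph{Necessity.} Let $W:\mathcal{G}_{1}\to\mathcal{G}_{2}$ be unitary with $WQ_{1}=Q_{2}W$. The spectral measure of $Q_{i}$ is $\E_{Q_{i}}(X)=$ multiplication by $\mathbbm{1}_{X}$, so $\E_{Q_{i}}(X)=0$ exactly when $\rho_{i}(X\cap\{dim\;G_{i}>0\})=0$. We may discard the set where $G_{i}(t)=0$, as is implicit in the statement. Since $W\E_{Q_{1}}W^{*}=\E_{Q_{2}}$ and the null sets of a spectral measure are unitary invariants, this gives $\rho_{1}\sim\rho_{2}$. For the multiplicities, fix $k\in\N\cup\{\infty\}$ and let $X_{k}^{i}:=\{t:dim\;G_{i}(t)=k\}$. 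Restricted to $\E_{Q_{i}}(X_{k}^{i})$, the operator $Q_{i}$ is unitarily equivalent to $k$ copies of multiplication by $t$ on $L^{2}(\rho_{i}|_{X_{k}^{i}})$. In other words it has uniform multiplicity $k$, as one sees by trivializing the field with a measurable orthonormal frame.

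The remaining step is to show that $X_{k}^{1}$ and $X_{k}^{2}$ agree up to null sets. The tool is the commutant: the von Neumann algebra $\{Q_{i}\}'$ consists exactly of the decomposable operators $t\mapsto T(t)\in B(G_{i}(t))$. The restriction of $\{Q_{i}\}'$ to $\E_{Q_{i}}(X)$ is therefore abelian if and only if $dim\;G_{i}\le1$ a.e. on $X$. More generally, the maximal number of mutually orthogonal nonzero projections in the commutant that are equivalent and all have the same central support $\E_{Q_{i}}(X)$ detects the least value of $dim\;G_{i}$ on $X$. Conjugation by $W$ carries the commutant, the central projections $\E_{Q_{1}}(X)$ and these equivalences to the corresponding objects for $Q_{2}$. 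Applying this to all Borel subsets $X$ then yields $dim\;G_{1}=dim\;G_{2}$ a.e.

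A lighter alternative is to avoid von Neumann algebra language. For a Borel set $X$, the maximal number of pairwise orthogonal cyclic subspaces for $Q_{i}|_{\E_{Q_{i}}(X)}$ whose spectral measures are each equivalent to $\rho_{i}|_{X}$ is $ess\inf_{X}dim\;G_{i}$, and this quantity is manifestly invariant under $W$.

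The main obstacle is this multiplicity-invariance step. That is where the real content of Hahn--Hellinger lies, together with the measurable-selection issues that arise when building frames and unitaries fibrewise. In the paper's context it would be acceptable to cite \cite[Theorem 7.5.1]{SOLBIR} for exactly this uniqueness of the multiplicity function.
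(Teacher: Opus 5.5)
The paper does not prove this statement. It quotes it from~\cite[Theorem 4.3(ii)]{MALAMUD2} and uses it as a black box, in Theorem~\ref{operunitequiv} and in the multiplicity proposition of Section 5. Your proposal therefore supplies an argument where the paper has only a citation. What you give is the classical Hahn--Hellinger proof, and it is correct.

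All three steps work:
\begin{itemize}
\item sufficiency via a fibrewise unitary rescaled by a Radon--Nikodym factor;
\item $\rho_1\sim\rho_2$ from the null sets of $\E_{Q_i}$;
\item the multiplicity step.
\end{itemize}

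Your \enquote{lighter alternative} is already a complete argument for the multiplicity step. A cyclic subspace generated by $g\in\E_{Q_i}(X)\mathcal{G}_i$ has spectral type $\rho_i|_X$ iff $g(t)\neq 0$ a.e.\ on $X$. Two cyclic subspaces are orthogonal iff $\langle g(t),g'(t)\rangle=0$ a.e. Hence the maximal number of such mutually orthogonal subspaces is at most $\mathrm{ess\,inf}_{X}\dim G_{i}$. This bound is attained by a measurable orthonormal frame multiplied by a positive weight in $L^{2}(\rho_i|_X)$. Since $W$ preserves all of these notions, the two essential infima agree for every $X$. Taking $X=\{\dim G_1=k<\dim G_2\}$, and symmetrically, gives $\dim G_1=\dim G_2$ a.e.

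The citation buys the paper brevity and keeps it inside the operator-measure framework of~\cite{MALAMUD2}. Your argument buys self-containedness, modulo standard direct-integral facts: measurable frames, and, for the commutant version, the identification of $\{Q_i\}'$ with the decomposable operators.

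Three small corrections.

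First, your zero-fibre caveat is genuinely necessary, not just convenient. $\rho_i$ can be altered arbitrarily on $\{G_i=\{0\}\}$ without changing $Q_i$, so the \enquote{only if} part fails as literally stated unless these sets are null. In the paper's uses $\rho_i\sim\E_{A_i}$, which rules this out.

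Second, the sufficiency formula is mislabelled. With $h=d\rho_1/d\rho_2$, $U(t):G_2(t)\to G_1(t)$ and $(Vf)(t)=h(t)^{-1/2}U(t)f(t)$, your norm computation is right for $f\in\mathcal{G}_2$. That means $V:\mathcal{G}_2\to\mathcal{G}_1$ with $VQ_2=Q_1V$. For $f\in\mathcal{G}_1$, as you wrote, $U(t)f(t)$ is undefined. In that direction you need $h(t)^{1/2}U(t)f(t)$ with $U(t):G_1(t)\to G_2(t)$. Either fix is harmless.

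Third, in the paper's own attributions, \cite[Theorem 7.5.1]{SOLBIR} goes with the existence of the model (Theorem~\ref{teoespectral}). The uniqueness statement is the one paired with \cite[Theorem 7.5.2]{SOLBIR}.
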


We give the definition of multiplicity function of a self-adjoint operator.

\begin{definition}
If $A$ holds hypothesis of Theorem \ref{teoespectral}, we define the multiplicity function of $A$
given by
\begin{equation}\label{funcionmult}
N_{A}(t):=dim\;G(t).
\end{equation}
\end{definition}

The following is a version of \cite[Corollary 4.4]{MALAMUD2} and \cite[Theorem 7.5.2]{SOLBIR}

\begin{theorem}\label{operunitequiv}
Let $A_{i}$ be a self-adjoint operator on the Hilbert space $\H_{i}$, with $i=1,2$ such that hypothesis of Theorem \ref{teoespectral} holds. If $A_{1}$ is unitarily equivalent to $A_{2}$, then 
\begin{center}
$\E_{A_{1}}$ is equivalent to $\E_{A_{2}}$ and $N_{A_{1}}(t)=N_{A_{2}}(t)$, $\E_{A_{i}}$-a.e. on $\R$.
\end{center}
\end{theorem}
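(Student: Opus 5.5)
The plan is to reduce the statement to the two structural results just quoted, Theorems \ref{teoespectral} and \ref{teoespectralii}.

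\emph{Step 1: realize both operators as multiplication operators.} Let $U:\H_{1}\to\H_{2}$ be unitary with $A_{2}=UA_{1}U^{-1}$. Then for every Borel set $X$ the spectral measures are related by
\begin{equation*}
\E_{A_{2}}(X)=U\E_{A_{1}}(X)U^{-1}.
\end{equation*}
Hence $\E_{A_{1}}(X)=0$ if and only if $\E_{A_{2}}(X)=0$, which is the first assertion: the two spectral measures are equivalent.

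\emph{Step 2: choose a common scalar measure.} Let $\rho_{1}$ be a scalar Borel measure equivalent to $\E_{A_{1}}$, as in the hypothesis of Theorem \ref{teoespectral}. By Step 1, $\rho_{1}$ is also equivalent to $\E_{A_{2}}$, so we may take $\rho_{2}=\rho_{1}=:\rho$. Theorem \ref{teoespectral} then gives unitary equivalences
\begin{equation*}
A_{i}\simeq Q_{i}\quad\text{on}\quad \int_{\R}\oplus G_{i}(t)\,d\rho(t),\qquad i=1,2.
\end{equation*}
Composing with $A_{1}\simeq A_{2}$, we get $Q_{1}\simeq Q_{2}$.

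\emph{Step 3: compare the fibers.} The "only if" direction of Theorem \ref{teoespectralii}, applied to $Q_{1}\simeq Q_{2}$, yields $\dim G_{1}(t)=\dim G_{2}(t)$ for $\rho$-a.e. $t$. Since $\rho$ is equivalent to $\E_{A_{i}}$, the same $\rho$-null exceptional set is also $\E_{A_{i}}$-null. By the definition (\ref{funcionmult}) this gives $N_{A_{1}}(t)=N_{A_{2}}(t)$, $\E_{A_{i}}$-a.e.

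\emph{Main obstacle: well-definedness of $N_{A}$.} The function $N_{A}$ depends on the chosen measure $\rho$ and on the chosen direct-integral representation. We must check that it is determined up to an $\E_{A}$-null set, so that the comparison in Step 3 is legitimate. Suppose $A$ has two representations, with measures $\rho,\rho'$ both equivalent to $\E_{A}$. The two multiplication operators are then unitarily equivalent, both being equivalent to $A$. Theorem \ref{teoespectralii} therefore forces the two fiber dimensions to agree $\rho$-a.e.

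If the two representations use different but equivalent measures, we first pass from $\rho'$ to $\rho$ through the unitary
\begin{equation*}
f\mapsto \left(\frac{d\rho'}{d\rho}\right)^{1/2}f .
\end{equation*}
This map preserves the fibers and intertwines the two multiplication operators, so it does not change the fiber dimensions. Once this is settled, the argument above is complete.
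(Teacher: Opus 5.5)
Your proof is correct and follows the paper's own argument: represent each $A_i$ as multiplication by $t$ on a direct integral via Theorem \ref{teoespectral}, transfer $A_1\simeq A_2$ to $Q_1\simeq Q_2$, and read off $\dim G_1(t)=\dim G_2(t)$ from Theorem \ref{teoespectralii}. The differences are harmless refinements: you get $\E_{A_1}\sim\E_{A_2}$ directly from $\E_{A_2}=U\E_{A_1}U^{-1}$ rather than from $\rho_1\sim\rho_2$ in Theorem \ref{teoespectralii}, you use a common $\rho$, and you check that $N_A$ is well defined up to null sets, which the paper leaves implicit (Theorem \ref{teoespectralii} already allows distinct equivalent measures, so your Radon--Nikodym rescaling step is not needed).
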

\begin{proof}
Let $\rho_{i}$ be a scalar measure equivalent to $\E_{A_{i}}$, with $i=1,2$. By Theorem \ref{teoespectral},
$A_{i}$ is unitarily equivalent to the multiplication operator $Q_{i}$ by $t$
 on $\int_{\R}\oplus G_{i}(t)d\rho_{i}(t)$. Then $Q_{1}$ is unitarily equivalent to $Q_{2}$ and by
Theorem \ref{teoespectralii}, $\E_{A_{1}}$ is equivalent to $\E_{A_{2}}$ and by (\ref{funcionmult}),
$N_{A_{1}}(t)=N_{A_{2}}(t)$, $\E_{A_{i}}$-a.e. on $\R$.
\end{proof}

Now we have the definition of multiplicity function of a matrix-valued measure.
%--------------------------------------------------------------------------------------
\begin{definition}[Definition 4.5, \cite{MALAMUD2}]
Let $\Omega$ be a matrix-valued measure. We define the density matrix
\begin{equation*}
\Psi_{\Omega}(t):=\left( \frac{d\mu_{ij}}{dTr\Omega}(t)\right) _{i,j=1}^{n}
\end{equation*}
where $\mu_{ij}$ and $Tr\Omega$ are given by (\ref{medidasasoci}). We call multiplicity function of
$\Omega$ to the expression
\begin{equation*}
N_{\Omega}(t):=rank\Psi_{\Omega}(t).
\end{equation*}
\end{definition}

Next we provide the definition of the Hilbert space $L^{2}(\Omega,\C^{n})$. For more details see
\cite{MALAMUD2, Kats}.

\begin{definition}
We define $L^{2}(\Omega,\C^{n})$ as the quotient space $\dot{L}^{2}(\Omega,\C^{n})\diagup Ker\;p$
where $\dot{L}^{2}(\Omega,\C^{n})$ is the set of Borel measurable functions
$f:\R\longrightarrow\C^{n}$ such that 
\begin{equation*}
p(f):=\int_{\R}\langle d\Omega(t)f(t),f(t)\rangle_{\C^{n}}<\infty
\end{equation*}
with the semiscalar product
\begin{equation*}
\langle f,g\rangle_{L^{2}(\Omega,\C^{n})}:=\int_{\R}\langle d\Omega(t)f(t),g(t)\rangle_{\C^{n}}.
\end{equation*}
\end{definition}

Now, we define to the spaces with semiscalar product $\dot{G}(t):=\left( \C^{n}, \langle\cdot,\cdot\rangle_{\dot{G}(t)}\right)$ where
\begin{equation}\label{spaceGoft}
\langle f,g\rangle_{\dot{G}(t)}:=\langle \Psi_{\Omega}(t)f,g\rangle_{\C^{n}}.
\end{equation}

Finally, we state the Kats Theorem which gives a functional description for $L^{2}(\Omega,\C^{n})$.

\begin{theorem}[Theorem 2.11, \cite{MALAMUD2}]\label{teokats}
If $\Omega$ is a matrix-valued measure, then 
\begin{equation*}
L^{2}(\Omega,\C^{n})=\int_{\R}\oplus G(t)dTr\Omega(t)
\end{equation*}
where $G(t):=\dot{G}(t)\diagup Ker\;p_{t}$, with $p_{t}$ the seminorm associated to (\ref{spaceGoft}), and
\begin{equation*}
\langle f,g\rangle_{L^{2}(\Omega,\C^{n})}=\int_{\R}\langle \Psi_{\Omega}(t)f(t),g(t)\rangle_{\C^{n}}.
\end{equation*}
In addition, $N_{\Omega}(t)=dim\;G(t)\leq n$.
\end{theorem}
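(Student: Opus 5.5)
This is the Kats theorem as stated in Malamud's survey, and the plan is to build an explicit unitary map from $L^{2}(\Omega,\C^{n})$ onto the direct integral $\int_{\R}\oplus G(t)\,dTr\Omega(t)$.

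\textbf{Step 1: absolute continuity and the density matrix.} Each $\mu_{ij}$ is absolutely continuous with respect to $Tr\Omega$. To see this, note that positivity of $\Omega(X)$ gives $|\mu_{ij}(X)|^{2}\leq\mu_{ii}(X)\mu_{jj}(X)$ on bounded Borel sets. Also $\mu_{ii}\leq Tr\Omega$. So the Radon--Nikodym derivatives defining $\Psi_{\Omega}$ exist. Next I show that $\Psi_{\Omega}(t)$ is positive semidefinite for $Tr\Omega$-a.e. $t$. Fix a countable dense set of vectors $c\in\mathbb{Q}[i]^{n}$. Since $\langle c,\Omega(X)c\rangle=\int_{X}\langle c,\Psi_{\Omega}(t)c\rangle\,dTr\Omega(t)\geq 0$ for every bounded $X$, we get $\langle c,\Psi_{\Omega}(t)c\rangle\geq0$ a.e. for each such $c$. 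Removing a single null set handles all $c$ at once, and continuity extends this to every $c\in\C^{n}$. Hence each $\dot G(t)$ is a genuine semi-inner product space.

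\textbf{Step 2: the semiscalar products agree.} The core identity is
\begin{equation*}
\int_{\R}\langle d\Omega(t)f(t),g(t)\rangle_{\C^{n}}=\int_{\R}\langle \Psi_{\Omega}(t)f(t),g(t)\rangle_{\C^{n}}\,dTr\Omega(t).
\end{equation*}
I first prove it for simple functions $f=\sum_{k}\mathbbm{1}_{X_{k}}a_{k}$, $g=\sum_{k}\mathbbm{1}_{X_{k}}b_{k}$. In that case it is just the definition of $\Psi_{\Omega}$ applied entrywise. I then extend to all $f\in\dot L^{2}(\Omega,\C^{n})$ by monotone convergence. The left side is defined through the scalar measures $\mu_{ij}$, and writing $d\mu_{ij}=(\Psi_{\Omega})_{ij}\,dTr\Omega$ turns it into the right side. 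Consequently $p(f)=0$ exactly when $p_{t}(f(t))=0$ for a.e. $t$. This shows the map $f\mapsto(t\mapsto[f(t)]\in G(t))$ is well defined and isometric from $L^{2}(\Omega,\C^{n})$ into the direct integral. The measurable structure of the field $G(t)$ is given by the constant sections $e_{1},\dots,e_{n}$, whose images span every fiber.

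\textbf{Step 3: surjectivity.} This is the main obstacle. An arbitrary measurable section $h(t)\in G(t)$ must be lifted to a Borel measurable $f:\R\to\C^{n}$. My first approach is to identify $G(t)$ isometrically with $Ran\,\Psi_{\Omega}(t)$ equipped with the inner product $\langle\Psi_{\Omega}(t)\cdot,\cdot\rangle$. I would then use the Moore--Penrose pseudoinverse, or equivalently the spectral projections of $\Psi_{\Omega}(t)$. These depend Borel-measurably on $t$, which follows from the measurability of the entries $(\Psi_{\Omega})_{ij}$ by a standard functional-calculus argument. Using them, I choose a representative $f(t)$ in $Ran\,\Psi_{\Omega}(t)$ measurably. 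Square-integrability of $h$ then yields $p(f)<\infty$.

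Alternatively, surjectivity follows from density. The image contains every section $\mathbbm{1}_{X}e_{i}$. Sections of this form generate a dense subspace of the direct integral, by the definition of measurable fields. Since an isometry has closed range, the range is everything.

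\textbf{Step 4: the dimension claim.} Finally, $\dim G(t)=\dim\big(\C^{n}/Ker\,\Psi_{\Omega}(t)\big)=rank\,\Psi_{\Omega}(t)=N_{\Omega}(t)\leq n$. This holds because $Ker\,p_{t}=Ker\,\Psi_{\Omega}(t)$, which in turn uses the positivity of $\Psi_{\Omega}(t)$ established in Step 1.
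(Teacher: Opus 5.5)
The paper does not prove this statement; it is quoted from \cite{MALAMUD2} as Kats' theorem. There is therefore nothing in the paper to compare with, and your proposal has to stand on its own. Its backbone is the standard argument and is sound: Radon--Nikodym density, a.e.\ positivity, fibres $\C^{n}/Ker\,\Psi_\Omega(t)$, a Moore--Penrose lift, and the rank count. Step 2 as written, however, does not work.

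For a general Borel $f$, the expression $\int\langle d\Omega(t)f(t),f(t)\rangle$ (which the paper leaves unspecified) cannot be defined entrywise through the scalar measures $\mu_{ij}$. Those integrals may diverge even when $\langle\Psi_\Omega f,f\rangle$ is $Tr\Omega$-integrable. For instance, take $n=2$ and $d\Omega(t)=\Psi(t)\,dt$ on $[-1,1]$ with
\begin{equation*}
\Psi(t)=\frac{1}{2}\begin{pmatrix}1&1-2t^{2}\\ 1-2t^{2}&1\end{pmatrix},\qquad f(t)=|t|^{-3/4}\begin{pmatrix}1\\ -1\end{pmatrix}.
\end{equation*}
Then $Tr\Omega=dt$ and $\Psi f=t^{2}f$, so $\langle\Psi f,f\rangle=2|t|^{1/2}$ is integrable, whereas $\int|f_{1}|^{2}\,d\mu_{11}=\frac{1}{2}\int_{-1}^{1}|t|^{-3/2}\,dt=\infty$.

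Since $\Psi(t)$ is invertible for $0<|t|<1$, this $f$ is essentially the only representative of its section. So the entrywise-integrable functions form a proper dense, hence non-complete, subspace of the direct integral, and the lift you build in Step 3 need not lie in it. Monotone convergence also has nothing to act on here: for simple $f_{k}\to f$ the quantities $\langle\Psi_\Omega f_{k},f_{k}\rangle$ are not monotone.

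The repair is definitional. Take $p(f):=\int\langle\Psi_\Omega(t)f(t),f(t)\rangle\,dTr\Omega(t)$, which agrees with the entrywise expression on simple functions and whenever the latter converges absolutely. Step 2 then becomes a tautology, and the whole content of the theorem sits in Step 3.

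Your alternative argument for surjectivity is circular. An isometry has closed range only if its domain is complete, and completeness of $L^{2}(\Omega,\C^{n})$ as a space of classes of Borel functions is exactly what Kats' theorem asserts. The argument works only if $L^{2}(\Omega,\C^{n})$ is defined as the abstract completion of simple functions, and then representing its elements by functions again requires the lift.

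So keep your first argument, which yields completeness as a by-product. In full, given a section $h$:
\begin{itemize}
\item Let $v(t):=\Psi_\Omega(t)c_{t}$ for any representative $c_{t}$ of $h(t)$. This is well defined because representatives differ by elements of $Ker\,\Psi_\Omega(t)$.
\item $v$ is measurable, because its coordinates are, up to conjugation, the functions $\langle h(t),[e_{i}]\rangle_{G(t)}$.
\item Set $f(t):=\Psi_\Omega(t)^{+}v(t)$. This is Borel, since $\Psi^{+}=\lim_{k\to\infty}(\Psi^{2}+k^{-1}I)^{-1}\Psi$.
\item Because $v(t)\in Ran\,\Psi_\Omega(t)$, we get $\Psi_\Omega(t)f(t)=v(t)$. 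Hence $[f(t)]=h(t)$ and $p(f)=\int\|h(t)\|_{G(t)}^{2}\,dTr\Omega(t)<\infty$.
\end{itemize}
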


\section{Aronszajn-Donoghue theory}

Let us remind the following definitions for the particular case of finite deficiency indices. Denote by $\varrho$ the resolvent set.

\begin{definition}
Let $B$ denote a densely defined closed symmetric operator on $\H$ with finite deficiency indices $(n,n)$ and
$\Pi=\left( \C^{n},\Gamma_{1},\Gamma_{2}\right)$ a boundary triplet for $B^{*}$. We define the Weyl function corresponding
to $\Pi$ as the function $M:\varrho(A_{1})\longrightarrow\C^{n}$ such that
\begin{center}
$\Gamma_{2}f_{z}=M(z)\Gamma_{1}f_{z}$ where $f_{z}\in Ker\;(B^{*}-zI)$ and $z\in\varrho(A_{1})$.
\end{center}
In addition, we define the Weyl function of an operator $A_{D}$ with $D\in\Her_{n}$ as
\begin{equation*}
M_{D}(z):=\left[ D-M(z)\right] ^{-1}.
\end{equation*}
We say that a closed operator $B$ is completely non-selfadjoint if there is no subspace reducing $B$ such that the part of
$B$ in this subspace is self-adjoint. 
%We also say that an operator is simple if is symmetric and completely
non-selfadjoint.\\
%there exists not operators $B_{1}$ and $B_{2}$ on $\H$ such that $B=B_{1}\oplus B_{2}$ where $B_{1}\neq 0$ is self-adjoint and $B_{2}$ is symmetric.\\

Let $B$ denote a symmetric completely non-selfadjoint operator on $\H$ with equal and finite deficiency indices. Let $A$ be any of its self-adjoint extensions. We say that an eigenvalue $z$ of $A$ is of maximum multiplicity if
\begin{equation*}
dim\;Ker(A-zI)=d_{\pm}(B).
\end{equation*}
Denote by $\sigma_{p}^{max}(A)$ the set of eigenvalues of maximum multiplicity of $A$.
\end{definition}

%\begin{definition}
%Dado un self-adjoint operator $A$ sobre un espacio de Hilbert $\H$, decimos que un self-adjoint operator $\tilde{A}$ sobre
%$\H$ es una perturbaci\'on singular de $A$ si $D(A)\neq D(\tilde{A})$ y
%\begin{equation*}
%\textit{D}=\left\lbrace x\in D(A)\cap D(\tilde{A}): Ax=\tilde{A}x\right\rbrace 
%\begin{equation*}
%es denso en $\H$. Si el operador sim\'etrico cerrado densamente definido sobre $\H$, $\textbf{A}:=A\upharpoonright_{\textit{D}}=\tilde{A}\upharpoonright_{\textit{D}}$, tiene \'indices de deficiencia finitos,
%decimos que $\tilde{A}$ es una perturbaci\'on singular de rango finito de $A.$
%\end{definition}

%Bajo esta definici\'on, podemos decir que la familia de extensiones autoadjuntas de un operador sim\'etrico cerrado densamente definido sobre un espacio de Hilbert with \'indices de deficiencia $(n,n)$ dadas por una tripleta de frontera $\Phi$ son perturbaciones singulares de alg\'un operador perteneciente a la familia.

It is easy to prove the following identity.

\begin{lemma}\label{identidadweyl}
Let $B$ denote a densely defined closed symmetric operator on $\H$ with finite deficiency indices $(n,n)$ and
$\Pi=\left( \C^{n},\Gamma_{1},\Gamma_{2}\right)$ a boundary triplet for $B^{*}$. For $D,D'\in\Her_{n}$,
\begin{equation*}
M_{D}(z)=M_{D'}(z)\left[ (D-D')M_{D'}(z)+I\right]^{-1}= \left[ M_{D'}(z)(D-D')+I\right]^{-1}M_{D'}(z).
\end{equation*} 
\end{lemma}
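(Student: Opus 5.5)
The plan is a direct algebraic verification from the definition $M_{D}(z)=[D-M(z)]^{-1}$. We work at a fixed $z\in\C\setminus\R$, where both $D-M(z)$ and $D'-M(z)$ are invertible. Indeed, $Im\,M(z)$ is definite because $M$ is a Nevanlinna-Herglotz matrix, and $D,D'$ are symmetric. Hence for $c\neq 0$,
\begin{equation*}
Im\langle (D-M(z))c,c\rangle_{\C^{n}}=-\langle Im\,M(z)c,c\rangle_{\C^{n}}\neq 0,
\end{equation*}
so $D-M(z)$ has trivial kernel, and likewise $D'-M(z)$. Thus $M_{D}(z)$ and $M_{D'}(z)$ are well defined invertible matrices.

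The key step is the splitting
\begin{equation*}
D-M(z)=(D-D')+(D'-M(z))=(D-D')+M_{D'}(z)^{-1}.
\end{equation*}
Factoring $M_{D'}(z)^{-1}$ on the right gives
\begin{equation*}
D-M(z)=\left[ (D-D')M_{D'}(z)+I\right] M_{D'}(z)^{-1}.
\end{equation*}
Factoring it on the left gives
\begin{equation*}
D-M(z)=M_{D'}(z)^{-1}\left[ M_{D'}(z)(D-D')+I\right].
\end{equation*}
In each factorization the left-hand side is invertible and $M_{D'}(z)^{-1}$ is invertible. Therefore the bracketed factors $(D-D')M_{D'}(z)+I$ and $M_{D'}(z)(D-D')+I$ are invertible as well, being products of invertible matrices. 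Inverting the two factorizations gives
\begin{equation*}
M_{D}(z)=M_{D'}(z)\left[ (D-D')M_{D'}(z)+I\right]^{-1}
\end{equation*}
and
\begin{equation*}
M_{D}(z)=\left[ M_{D'}(z)(D-D')+I\right]^{-1}M_{D'}(z),
\end{equation*}
which are the two claimed expressions.

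The only point needing care is the invertibility of $D-M(z)$, which is what justifies the definition of $M_{D}$ and hence all the inverses used. On $\C\setminus\R$ this is the imaginary-part argument above. If the identity is wanted on a larger set of $z$, it holds wherever $D-M(z)$ and $D'-M(z)$ are invertible, since the algebra is otherwise unchanged.
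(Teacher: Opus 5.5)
Your proof is correct. The paper gives no proof of this lemma and only remarks that the identity is easy, and your argument is the direct algebraic verification it leaves to the reader: split $D-M(z)=(D-D')+M_{D'}(z)^{-1}$, factor $M_{D'}(z)^{-1}$ out on the right or on the left, and invert, with your check via the strict definiteness of $Im\,M(z)$ on $\C\setminus\R$ supplying the invertibility of $D-M(z)$ that the paper assumes when defining $M_{D}$.
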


The following is a result of \cite{MALAMUD} that generalizes both \cite[Theorem 4]{DON} as \cite[Theorem 4]{Aron}.
%It is worth mentioning that a proof of this theorem does not appear in said paper .

\begin{theorem}[Theorem 2, \cite{MALAMUD}]\label{malamud}
Suppose that $B$ is a symmetric completely non-selfadjoint operator on $\H$ with finite deficiency indices $(n,n)$.
Let $\Pi=\left( \C^{n},\Gamma_{1},\Gamma_{2}\right)$ be a boundary triplet for $B^{*}$ and $M$ the associated Weyl function. Let $\Omega$ be the matrix-valued measure associated with $M$.
Then, for every self-adjoint extension $A_{D}$ of $B$ with $D\in\Her_{n}$ 
\begin{equation*}
\sigma_{p}^{max}(A_{D})=\left\lbrace x\in\R:T(x)\in\C^{n\times n}\;and\;M(x+i0)=D\right\rbrace .
\end{equation*}
where $T:\R\longrightarrow\ \C^{n\times n}$ is given by
\begin{equation*}
T(x):=\int \frac{d\Omega(y)}{(x-y)^{2}}.
\end{equation*}
\end{theorem}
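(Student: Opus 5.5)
We prove the two inclusions separately, following Aronszajn's scalar argument. Throughout we use that $B$ is completely non-selfadjoint, so that $M$ admits the representation (\ref{funcionM}) with $\Omega$ the spectral measure of $A_{1}$ transported by the $\gamma$-field. Concretely, if $\gamma(z):=(\Gamma_{1}\upharpoonright Ker(B^{*}-zI))^{-1}$, then
\begin{equation*}
M(z)-M(w)^{*}=(z-\overline{w})\gamma(w)^{*}\gamma(z).
\end{equation*}
Moreover $\H=\overline{span}\{\gamma(z)c: z\in\C\setminus\R,\ c\in\C^{n}\}$, and $L^{2}(\Omega,\C^{n})$ is a spectral model of $A_{1}$ in which $\gamma(z)c$ corresponds to $c/(y-z)$. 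We may also assume the linear term of $M$ vanishes, since $B$ is densely defined. The key identification is then the following. If $f\in Ker(A_{D}-xI)$ with $x\in\R$, then $f\in Ker(B^{*}-xI)$ and $\Gamma_{2}f=D\Gamma_{1}f$. Since $A_{D}$ and $A_{1}$ are disjoint, $\Gamma_{1}f\neq 0$ whenever $f\neq0$. So $\Gamma_{1}$ maps $Ker(A_{D}-xI)$ injectively into $\C^{n}$, and maximum multiplicity means that this map is onto $\C^{n}$.

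\textbf{Inclusion $\subseteq$.} Let $x\in\sigma_{p}^{max}(A_{D})$. For each $c\in\C^{n}$ take $f_{c}\in Ker(A_{D}-xI)$ with $\Gamma_{1}f_{c}=c$. Apply the abstract Green identity to $f_{c}$ and $\gamma(z)c'$, and use $(B^{*}-z)\gamma(z)=0$. This gives
\begin{equation*}
(x-\overline{z})\langle f_{c},\gamma(z)c'\rangle=\langle (D-M(z)^{*})c,c'\rangle .
\end{equation*}
In the spectral model this says that $f_{c}$ corresponds to the function $y\mapsto c/(y-x)$. Hence $\|f_{c}\|^{2}=\int\langle d\Omega(y)c,c\rangle/(y-x)^{2}<\infty$ for every $c$, which means $T(x)\in\C^{n\times n}$. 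Proposition \ref{implicaciongrl} then gives the existence of $M(x+i0)$ and its symmetry. We read off $M(x+i0)=D$ by letting $z=x+i\varepsilon\to x$ in the identity above. The left side tends to $\int\langle d\Omega(y)c,c'\rangle/(y-x)\cdot(\ldots)$; equivalently, $\int\big(\frac1{y-x}-\frac{1}{y-z}\big)d\Omega\to0$ by dominated convergence, using $T(x)<\infty$. This identifies $D-M(x+i0)=0$.

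\textbf{Inclusion $\supseteq$.} Assume $T(x)\in\C^{n\times n}$ and $M(x+i0)=D$. For $c\in\C^{n}$ define $f_{c}\in\H$ as the element corresponding to $c/(y-x)\in L^{2}(\Omega,\C^{n})$; this is legitimate because $T(x)$ is finite. Equivalently, $f_{c}=\lim_{\varepsilon\to0}\gamma(x+i\varepsilon)c$ in norm, and the convergence follows by dominated convergence with majorant $1/(y-x)^{2}$. Along this limit we have $(B^{*}-x)\gamma(x+i\varepsilon)c=i\varepsilon\gamma(x+i\varepsilon)c\to0$. Also $\Gamma_{1}\gamma c=c$, and $\Gamma_{2}\gamma(x+i\varepsilon)c=M(x+i\varepsilon)c\to M(x+i0)c=Dc$. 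Since $B^{*}$ is closed and $\Gamma$ is continuous in the graph norm of $B^{*}$ (finite deficiency), we get $f_{c}\in D(B^{*})$ with $B^{*}f_{c}=xf_{c}$, $\Gamma_{1}f_{c}=c$ and $\Gamma_{2}f_{c}=Dc$. Thus $f_{c}\in Ker(A_{D}-xI)$, and $c\mapsto f_{c}$ is injective, so $\dim Ker(A_{D}-xI)\ge n$. The reverse inequality $\dim Ker(A_{D}-xI)\le\dim Ker(B^{*}-xI)\le n$ holds because $\Gamma_{1}$ is injective on the kernel, as noted above.

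\textbf{Main obstacle.} The delicate step is the rigorous construction of the spectral model. We must show that the map $\gamma(z)c\mapsto c/(y-z)$ extends to a unitary $\H\to L^{2}(\Omega,\C^{n})$ intertwining $A_{1}$ with multiplication by $y$. Here we would use the identity for $M(z)-M(w)^{*}$ together with (\ref{funcionM}) to check that inner products are preserved. We would use complete non-selfadjointness for density of the span, and the Kats description (Theorem \ref{teokats}) to make sense of $L^{2}(\Omega,\C^{n})$ and of the functions $c/(y-x)$ in it. Once this model is available, both inclusions reduce to the scalar computations of Aronszajn applied to quadratic forms, in combination with Proposition \ref{me}.
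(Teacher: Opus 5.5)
Your proof is correct in substance, but it takes a genuinely different route from the paper's.

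\textbf{How the two routes differ.} The paper never handles eigenvectors directly. It passes to $M_{D}=(D-M)^{-1}$ and its measure $\Omega_{D}$, uses $\Omega_{D}(\{x\})=-i\lim_{\epsilon\to 0}\epsilon M_{D}(x+i\epsilon)$, and converts multiplicity into rank via Corollary \ref{conjetura}. That corollary rests on the functional model of $A_{D}$ in $L^{2}(\Omega_{D},\C^{n})$ (Theorem \ref{corolario7.10deM}), Kats' theorem and multiplicity functions. Both inclusions then reduce to the Aronszajn-type matrix computation $\Omega_{D}(\{x\})=T(x)^{-1}$, which the paper reuses in Section 6.

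You stay on the vector side. $\Gamma_{1}$ embeds $Ker(A_{D}-xI)$ into $\C^{n}$, and the eigenvectors are controlled, respectively constructed, as $f_{c}=\lim_{\epsilon\to 0}\gamma(x+i\epsilon)c$. You use only the Green identity, the Gram identity for $\gamma$, the representation (\ref{funcionM}), closedness of $B^{*}$ and graph-norm continuity of $\Gamma$. This bypasses $\Omega_{D}$ and the multiplicity machinery entirely. It also produces explicit eigenvectors whose Gram matrix is $T(x)$, which is the vector-side form of the paper's identity $\Omega_{D}(\{x\})=T(x)^{-1}$.

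\textbf{The obstacle you flag is not needed.} The Gram identity and (\ref{funcionM}) directly give $\langle\gamma(z)c,\gamma(w)c'\rangle=\int_{\R}\langle d\Omega(y)c,c'\rangle\,(y-z)^{-1}(y-\overline{w})^{-1}$. That is all the Cauchy argument in $\supseteq$ uses, so you need neither a unitary model of $A_{1}$ nor density of the $\gamma$-field. Complete non-selfadjointness then enters only through the absence of eigenvalues of $B$.

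\textbf{Two steps should be tightened.}

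First, the sentence saying the Green identity \emph{says} that $f_{c}$ corresponds to $c/(y-x)$ is not justified as it stands. To compare inner products you would first need $c/(y-x)\in L^{2}(\Omega,\C^{n})$, which is what you are trying to prove. A direct replacement comes from the Green identity applied to $f_c$ and $\gamma(x+i\epsilon)c$, together with the Gram identity:
\begin{equation*}
\|f_{c}-\gamma(x+i\epsilon)c\|^{2}=\|f_{c}\|^{2}-\frac{1}{\epsilon}\langle Im\,M(x+i\epsilon)c,c\rangle=\|f_{c}\|^{2}-\int_{\R}\frac{\langle d\Omega(y)c,c\rangle}{(y-x)^{2}+\epsilon^{2}}.
\end{equation*}
Monotone convergence then yields $\langle T(x)c,c\rangle\le\|f_{c}\|^{2}<\infty$. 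Consequently $\|\gamma(x+i\epsilon)c'\|$ stays bounded. The left side of your identity is $i\epsilon\langle f_{c},\gamma(x+i\epsilon)c'\rangle\to 0$, which forces $M(x+i\epsilon)^{*}\to D$, i.e.\ $M(x+i0)=D$.

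Second, disjointness of $A_{D}$ and $A_{1}$ only shows something weaker than you claim. A nonzero $f\in Ker(A_{D}-xI)$ with $\Gamma_{1}f=0$ would lie in $D(B)$ with $Bf=xf$. You must add that a completely non-selfadjoint symmetric operator has no eigenvalues, since $span\{f\}$ would reduce $B$ to a self-adjoint part. This gives $\dim Ker(A_{D}-xI)\le n$ directly. The intermediate bound via $\dim Ker(B^{*}-xI)$ is not what your injectivity argument proves.
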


%In this work a proof is offered. For this purpose, we require the following theorem.
For its proof, we require the following theorem. 

\begin{theorem}[Corollary 7.10, \cite{MALAMUD2}]\label{corolario7.10deM}
Suppose hypothesis of Theorem \ref{malamud}. Let $A_{D}$ be a extension of $B$ with
$D\in\Her_{n}$, $M_{D}$ the Weyl function of $A_{D}$ and $\Omega_{D}$ the associated matrix-valued measure.
Let $Q_{\Omega_{D}}$ be the multiplication operator by $t$ on $L^{2}(\Omega_{D} ,\C^{n})$. Then $\H$ is unitary to $L^{2}\left(\Omega_{D},\C^{n}\right) $
and $A_{D}$ is unitarily equivalent to $Q_{\Omega_{D}}$.
%with domain
%\begin{equation}\label{operadordemult}
%D(Q_{\Omega}):=\left\lbrace f\in L^{2}(\Omega_{D} ,\C^{n}):Q_{\Omega}f\in L^{2}(\Omega_{D} ,\C^{n})\right\rbrace .
%\end{equation}
%and for $f\in D(Q_{\Omega})$, $Q_{\Omega}f(t):=tf(t)$. 
\end{theorem}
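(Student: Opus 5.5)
The plan is to build an explicit unitary $U:\H\to L^{2}(\Omega_{D},\C^{n})$ out of the $\gamma$-field of a boundary triplet adapted to $A_{D}$. We then check that $U$ turns the resolvent of $A_{D}$ into the resolvent of $Q_{\Omega_{D}}$.

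\emph{Step 1: the triplet and its $\gamma$-field.} Set $\Gamma_{1}^{D}:=\Gamma_{2}-D\Gamma_{1}$ and $\Gamma_{2}^{D}:=-\Gamma_{1}$. Since $D\in\Her_{n}$, the abstract Green identity and surjectivity carry over, so $\Pi_{D}=(\C^{n},\Gamma_{1}^{D},\Gamma_{2}^{D})$ is a boundary triplet for $B^{*}$. By Theorem \ref{BT-theorem}, $Ker\;\Gamma_{1}^{D}=D(A_{D})$. If $\Gamma_{2}f_{z}=M(z)\Gamma_{1}f_{z}$, then $\Gamma_{2}^{D}f_{z}=[D-M(z)]^{-1}\Gamma_{1}^{D}f_{z}$, so the Weyl function of $\Pi_{D}$ is exactly $M_{D}$.

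Let $\gamma_{D}(z):=(\Gamma_{1}^{D}\upharpoonright Ker(B^{*}-zI))^{-1}$ for $z\in\varrho(A_{D})$. Applying Green's identity to $\gamma_{D}(z)a$ and $\gamma_{D}(w)b$ gives the standard formula
\begin{equation*}
\langle \gamma_{D}(z)a,\gamma_{D}(w)b\rangle=\Big\langle \frac{M_{D}(z)-M_{D}(w)^{*}}{z-\overline{w}}\,a,b\Big\rangle_{\C^{n}} .
\end{equation*}
The same identity shows that $M_{D}$ is Nevanlinna--Herglotz. Its kernel $\gamma_{D}(z)$ is trivial because $B$ is symmetric.

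\emph{Step 2: the isometry.} By the canonical representation, $M_{D}(z)=C+Lz+\int(\frac{1}{y-z}-\frac{y}{1+y^{2}})d\Omega_{D}(y)$ with $L\geq 0$. I would show $L=0$: a nonzero $L$ would produce, via $\lim_{\varepsilon\to\infty}\varepsilon\|\gamma_{D}(i\varepsilon)a\|^{2}$, a nonzero $a$ with $\gamma_{D}(i\varepsilon)a$ behaving like an eigenvector at infinity. This contradicts the density of $D(B)$, since $\Gamma^{D}_{1}$ has dense kernel. Granting this, the formula of Step 1 becomes
\begin{equation*}
\langle \gamma_{D}(z)a,\gamma_{D}(w)b\rangle=\int_{\R}\Big\langle d\Omega_{D}(t)\frac{a}{t-z},\frac{b}{t-w}\Big\rangle_{\C^{n}} .
\end{equation*}
Hence $U\gamma_{D}(z)a:=\frac{a}{t-z}$, extended linearly to $\mathcal{L}:=span\{\gamma_{D}(z)a: z\in\C\setminus\R,\;a\in\C^{n}\}$, is well defined and isometric.

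\emph{Step 3: density on both sides.} In $\H$, $\mathcal{L}=span\bigcup_{z\notin\R}Ker(B^{*}-zI)$. Its orthogonal complement is invariant under the resolvents of $A_{D}$ and is contained in $D(B)$-type vectors on which $A_{D}$ acts as $B$. It therefore reduces $B$ to a self-adjoint part, so it is $\{0\}$ because $B$ is completely non-selfadjoint. In $L^{2}(\Omega_{D},\C^{n})$, if $g$ is orthogonal to all $a/(t-z)$, then the $\C^{n}$-valued measure $\Omega_{D}g$ has vanishing Stieltjes transform. By Stieltjes inversion it is zero, so $g=0$ in the quotient; Theorem \ref{teokats} can be used to phrase this pointwise via $\Psi_{\Omega_{D}}$. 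Thus $U$ extends to a unitary $\H\to L^{2}(\Omega_{D},\C^{n})$.

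\emph{Step 4: intertwining.} The resolvent identity $(A_{D}-zI)^{-1}\gamma_{D}(w)=\frac{\gamma_{D}(z)-\gamma_{D}(w)}{z-w}$ follows because the right side lies in $Ker\;\Gamma_{1}^{D}$ and $(B^{*}-z)$ maps it to $\gamma_{D}(w)$. Under $U$ it becomes multiplication by $(t-z)^{-1}$ applied to $a/(t-w)$. By density, $U(A_{D}-zI)^{-1}U^{-1}=(Q_{\Omega_{D}}-zI)^{-1}$, and hence $UA_{D}U^{-1}=Q_{\Omega_{D}}$.

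The main obstacle I expect is Step 2 and the $\H$-side of Step 3. These are the two places where the hypotheses are really used: the absence of the linear term needs dense definition, and the density of the defect span needs complete non-selfadjointness. If the direct argument for $L=0$ is awkward, I would first try computing $\|\gamma_{D}(i\varepsilon)a\|^{2}$ as $\varepsilon\to\infty$ through $\langle (A_{D}-i\varepsilon)^{-1}\cdot,\cdot\rangle$ and using that $\varepsilon(A_{D}-i\varepsilon)^{-1}\to 0$ weakly on $\H$.
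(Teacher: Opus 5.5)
The paper does not prove this statement. It quotes it from \cite[Corollary 7.10]{MALAMUD2} and uses it as a black box, together with Kats' theorem and the direct-integral multiplicity theory of Section 4, to reach Corollary \ref{conjetura}. Your proposal gives a self-contained proof along the classical functional-model route: the $\gamma$-field identity, density of the defect span from complete non-selfadjointness, Stieltjes inversion on the $L^{2}$ side, and the resolvent identity for $\gamma_{D}$. Apart from the point below, it is correct.

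Your route adds transparency where the paper is silent. Step 1 shows why a result formulated for the distinguished extension $Ker\;\Gamma_{1}$ applies to $A_{D}$ with Weyl function $[D-M(z)]^{-1}$, namely via the triplet $(\C^{n},\Gamma_{2}-D\Gamma_{1},-\Gamma_{1})$; the paper uses this tacitly. Working directly in the model also shows that the multiplicity machinery is not needed downstream. $Ker(Q_{\Omega_{D}}-\lambda I)$ consists of the classes of $c\,\chi_{\{\lambda\}}$ with $c\in\C^{n}$, and $\|c\,\chi_{\{\lambda\}}\|^{2}=\langle\Omega_{D}(\{\lambda\})c,c\rangle$. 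So its dimension is $rank\;\Omega_{D}(\{\lambda\})$, which is Corollary \ref{conjetura}. The citation route is shorter and places the result in a general multiplicity theory for operator measures, which also gives the a.e.\ identity $N_{\Omega_{D}}=N_{A_{D}}$.

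The point that needs repair is the justification of $L=0$ in Step 2. The claim is true and is indeed where density enters, but your argument for it does not work as written.

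\emph{Wrong quantity.} What detects $L$ is
$\|\gamma_{D}(i\varepsilon)a\|^{2}=\langle La,a\rangle+\int_{\R}\frac{d\langle\Omega_{D}(t)a,a\rangle}{t^{2}+\varepsilon^{2}}$,
which tends to $\langle La,a\rangle$ as $\varepsilon\to\infty$. The quantity $\varepsilon\|\gamma_{D}(i\varepsilon)a\|^{2}$ is not suitable, since it can diverge even when $L=0$.

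\emph{False fallback.} $\varepsilon(A_{D}-i\varepsilon I)^{-1}$ converges strongly to $iI$, not weakly to $0$. What you need is $-i\varepsilon(A_{D}-i\varepsilon I)^{-1}\to I$ strongly. This holds because $A_{D}$ is a densely defined self-adjoint operator; for a self-adjoint relation the limit is only the projection onto $\overline{D(A_{D})}$.

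\emph{The fix.} Insert this into your own Step 4 identity $\gamma_{D}(i\varepsilon)=\gamma_{D}(i)+(i\varepsilon-i)(A_{D}-i\varepsilon I)^{-1}\gamma_{D}(i)$. It gives $\gamma_{D}(i\varepsilon)a\to 0$, hence $\langle La,a\rangle=0$ for every $a$.

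Step 3 also needs tightening. It is not true that $\mathcal{L}^{\perp}$ lies in $D(B)$. What holds is that $(A_{D}-zI)^{-1}$ maps $\mathcal{L}^{\perp}=\bigcap_{z\notin\R}Ran(B-zI)$ into $D(B)$. Combined with resolvent invariance, this makes $\mathcal{L}^{\perp}$ a reducing subspace of $B$ on which $B$ is self-adjoint.
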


Due to the above we conclude the following.

\begin{prop}
Suppose the above hypothesis. Let $\E_{D}$ be the spectral family of $A_{D}$ with $D\in\Her_{n}$. Then
\begin{center}
$N_{\Omega_{D}}(t)=N_{A_{D}}(t)$, $\E_{D}$-a.e. on $\R$.
\end{center}
Furthermore, $N_{A_{D}}(\lambda)=dim\;Ker(A_{D}-\lambda I)$ if $\lambda$ is an eigenvalue of $A_{D}$.
\end{prop}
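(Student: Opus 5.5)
Both claims follow by composing the unitary equivalences already stated; no new spectral analysis is needed.

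\textbf{First claim.} By Theorem \ref{corolario7.10deM}, $A_{D}$ is unitarily equivalent to the multiplication operator $Q_{\Omega_{D}}$ on $L^{2}(\Omega_{D},\C^{n})$. By Kats' Theorem \ref{teokats},
\begin{equation*}
L^{2}(\Omega_{D},\C^{n})=\int_{\R}\oplus G(t)\,dTr\Omega_{D}(t),\qquad \dim G(t)=N_{\Omega_{D}}(t).
\end{equation*}
Under this identification $Q_{\Omega_{D}}$ is exactly the operator $Q$ of Theorem \ref{teoespectral}, multiplication by $t$, with $\rho=Tr\Omega_{D}$. Its spectral measure is equivalent to $Tr\Omega_{D}$: a Borel set $X$ has $\E_{Q}(X)=0$ iff $\chi_{X}f=0$ for all $f$, iff $Tr\Omega_{D}(X\cap\{N_{\Omega_{D}}>0\})=0$. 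Since $\Psi_{\Omega_{D}}$ has trace $1$ $Tr\Omega_{D}$-a.e., $N_{\Omega_{D}}\geq 1$ a.e., so this is the same as $Tr\Omega_{D}(X)=0$. Hence $N_{Q_{\Omega_{D}}}(t)=N_{\Omega_{D}}(t)$ by the definition (\ref{funcionmult}).

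Now apply Theorem \ref{operunitequiv} to the pair $A_{D}$ and $Q_{\Omega_{D}}$. It gives that $\E_{D}$ is equivalent to $\E_{Q_{\Omega_{D}}}$, hence to $Tr\Omega_{D}$, and that $N_{A_{D}}=N_{Q_{\Omega_{D}}}=N_{\Omega_{D}}$, $\E_{D}$-a.e. This uses $\rho=Tr\Omega_{D}$ as the scalar measure equivalent to $\E_{D}$ in Theorem \ref{teoespectral}. The multiplicity function does not depend on that choice, by Theorem \ref{teoespectralii} together with a Radon–Nikodym rescaling of the direct integral.

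\textbf{Second claim.} Let $\lambda$ be an eigenvalue of $A_{D}$. Then $\E_{D}(\{\lambda\})\neq 0$, so by the equivalence of measures $\rho(\{\lambda\})>0$, i.e.\ $\lambda$ is an atom. In the model $\int_{\R}\oplus G(t)\,d\rho(t)$, a vector $f$ satisfies $Qf=\lambda f$ iff $(t-\lambda)f(t)=0$ $\rho$-a.e., i.e.\ iff $f$ is supported on $\{\lambda\}$. So
\begin{equation*}
Ker(Q-\lambda I)=\{\chi_{\{\lambda\}}g : g\in G(\lambda)\}\cong G(\lambda),
\end{equation*}
and this isomorphism is isometric up to the factor $\rho(\{\lambda\})$. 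Therefore $\dim Ker(A_{D}-\lambda I)=\dim G(\lambda)=N_{A_{D}}(\lambda)$, because unitary equivalence preserves kernel dimensions. Since $\lambda$ has positive measure, the value $N_{A_{D}}(\lambda)$ is well defined and not merely an a.e. statement.

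\textbf{Main obstacle.} The delicate step is justifying that the a.e.-defined multiplicity function is independent of the chosen model, so that the value at an atom is meaningful. I would handle this by noting that Theorem \ref{teoespectralii} forces equality of $\dim G$ at every atom, since atoms have positive measure.
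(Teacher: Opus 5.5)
Your proposal is correct and follows essentially the paper's argument. Both use the unitary equivalence $A_{D}\simeq Q_{\Omega_{D}}$ from Theorem \ref{corolario7.10deM} and Kats' realization $L^{2}(\Omega_{D},\C^{n})=\int_{\R}\oplus G(t)\,dTr\Omega_{D}(t)$. Both then use Theorem \ref{teoespectralii} to identify $\dim G(t)$ with the multiplicity function: the paper compares with an auxiliary model $\int_{\R}\oplus H(t)\,dTr\Omega_{D}(t)$, while you invoke model-independence of $N$. Finally both compute $Ker(Q-\lambda I)$ as the fibre over the atom $\lambda$. Your extra checks are useful additions: $Tr\,\Psi_{\Omega_{D}}=1$ a.e.\ makes the spectral measure of $Q_{\Omega_{D}}$ equivalent to $Tr\Omega_{D}$, and atoms make the value $N_{A_{D}}(\lambda)$ well defined; the paper only asserts these.
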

\begin{proof}
By the previous theorem, the operators  $A_{D}$ and $Q_{\Omega_{D}}$ are unitarily equivalent and the multiplicity
functions satisfy
\begin{center}
$N_{A_{D}}(t)=N_{Q_{\Omega_{D}}}(t)$, $\E_{D}$-a.e. on $\R$.
\end{center}
On the other hand, $\E_{Q_{\Omega_{D}}}$, $\E_{D}$, $\Omega_{D}$ and $Tr\Omega_{D}$ are equivalent and by
Theorem \ref{teoespectral}, $Q_{\Omega_{D}}$ is unitarily equivalent to the multiplication operator $Q$ on the direct integral
$\int_{\R}H(t)dTr\Omega_{D}(t)$. We know that $N_{Q_{\Omega_{D}}}(t)=dim\;H(t)$ and by Kats Theorem
$N_{\Omega_{D}}(t)=dim\;G(t)$.
Since $Q_{\Omega_{D}}$ is unitarily equivalent to $Q$, by Theorem \ref{teoespectralii}, 
\begin{center}
$N_{\Omega_{D}}(t)=dim\;G(t)=dim\;H(t)=N_{Q_{\Omega_{D}}}(t)$, $\E_{D}$-a.e. on $\R$.
\end{center}
Consequently, 
\begin{center}
$N_{\Omega_{D}}(t)=N_{A_{D}}(t)$, $\E_{D}$-a.e. on $\R$.
\end{center}
Now, we choose $f\in D(A_{D})$ such that $(A_{D}-\lambda)f=0$. By Theorem \ref{corolario7.10deM} and the analysis made 
we have the equality
\begin{center}
$(t-\lambda)f(t)=0$ where $f(t)\in G(t)$, $Tr\;\Omega_{D}$-a.e. on $\R$.
\end{center}
%Entonces $f(t)=0$ \'o $t=\lambda$, $Tr\;\Omega_{D}$-c.s. sobre $\R$. 
Then $f\in\int_{\R}\oplus G(t)dTr\Omega_{D}(t)$ satisfies the above equality if and only if $f(t)=0$ when $t\neq\lambda$ and
$f(\lambda)\neq 0$, $Tr\;\Omega_{D}$-a.e. on $\R$. Hence $Ker(A_{D}-\lambda I)$ is isomorphic to $G(\lambda)$.
\end{proof}

We can conclude the result that will serve us to prove Theorem \ref{malamud}.

\begin{corollary}\label{conjetura}
Suppose the last hypothesis. If $\lambda\in\sigma_{p}(A_{D})$, then
\begin{equation*}
rank\;\Omega_{D}(\left\lbrace \lambda\right\rbrace)=dim\;Ker(A_{D}-\lambda I).
\end{equation*} 
\end{corollary}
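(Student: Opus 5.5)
We combine the preceding Proposition with the structure of the Kats decomposition at an atom. Fix $\lambda\in\sigma_{p}(A_{D})$. Since $A_{D}$ is unitarily equivalent to $Q_{\Omega_{D}}$ (Theorem \ref{corolario7.10deM}), $\lambda$ is an eigenvalue of $Q_{\Omega_{D}}$. Hence $\E_{D}(\{\lambda\})\neq 0$, and by equivalence of $\E_{D}$, $\Omega_{D}$ and $Tr\Omega_{D}$ we get $Tr\Omega_{D}(\{\lambda\})>0$. So $\lambda$ is an atom of the scalar measure $Tr\Omega_{D}$. The previous Proposition gives $N_{A_{D}}(\lambda)=dim\;Ker(A_{D}-\lambda I)$, and its proof identifies $Ker(A_{D}-\lambda I)$ with $G(\lambda)$. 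Theorem \ref{teokats} gives $dim\;G(\lambda)=N_{\Omega_{D}}(\lambda)=rank\,\Psi_{\Omega_{D}}(\lambda)$. It therefore remains to show
\begin{equation*}
rank\,\Psi_{\Omega_{D}}(\lambda)=rank\;\Omega_{D}(\{\lambda\}).
\end{equation*}

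For this we compute the density matrix at the atom. Since $Tr\Omega_{D}(\{\lambda\})>0$, the Radon--Nikodym derivative at an atom is simply the ratio of masses, and it is well defined at $\lambda$ itself, not only almost everywhere. Thus
\begin{equation*}
\frac{d\mu_{ij}}{dTr\Omega_{D}}(\lambda)=\frac{\mu_{ij}(\{\lambda\})}{Tr\Omega_{D}(\{\lambda\})},
\end{equation*}
that is, $\Psi_{\Omega_{D}}(\lambda)=Tr\Omega_{D}(\{\lambda\})^{-1}\,\Omega_{D}(\{\lambda\})$. A nonzero scalar multiple does not change rank, which gives the displayed equality. Chaining the identities then yields $rank\;\Omega_{D}(\{\lambda\})=dim\;G(\lambda)=dim\;Ker(A_{D}-\lambda I)$.

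The main obstacle is justifying evaluation at a single point. Both the multiplicity identity $N_{\Omega_{D}}=N_{A_{D}}$ and the density $\Psi_{\Omega_{D}}$ are a priori defined only $Tr\Omega_{D}$-a.e. The key observation is that $\{\lambda\}$ has positive $Tr\Omega_{D}$-measure. Consequently any two representatives of these a.e.-defined objects must agree at $\lambda$, so the values at $\lambda$ are canonical. I would state this explicitly, and check the same point for the identification $Ker(A_{D}-\lambda I)\cong G(\lambda)$ from the previous proof. That identification should read: the eigenvectors are exactly the functions supported on $\{\lambda\}$, namely $f=\mathbbm{1}_{\{\lambda\}}v$ with $v\in G(\lambda)$, and their norm is $\langle\Omega_{D}(\{\lambda\})v,v\rangle_{\C^{n}}$. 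This also gives a direct check: the kernel is isomorphic to $\C^{n}$ modulo $Ker\,\Omega_{D}(\{\lambda\})$, whose dimension is $rank\;\Omega_{D}(\{\lambda\})$.
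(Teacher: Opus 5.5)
Your proposal is correct and follows the paper's proof. As in the paper, Radon--Nikodym at the atom gives $\Omega_{D}(\{\lambda\})=Tr\Omega_{D}(\{\lambda\})\,\Psi_{\Omega_{D}}(\lambda)$, with $Tr\Omega_{D}(\{\lambda\})\neq 0$ by equivalence with $\E_{D}$, and the preceding Proposition then turns $N_{\Omega_{D}}(\lambda)$ into $dim\;Ker(A_{D}-\lambda I)$. Two additions are sound: your explicit point that a.e.-defined identities hold at $\lambda$ because $\{\lambda\}$ is an atom (the paper uses this tacitly), and your direct identification of the eigenspace in $L^{2}(\Omega_{D},\C^{n})$ with $\C^{n}/Ker\,\Omega_{D}(\{\lambda\})$.
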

\begin{proof}
Let $f_{ij}$ be the entries of the density matrix of $\Psi_{\Omega_{D}}$. By Radon-Nikodym Theorem, 
\begin{center}
$\mu_{ij}(G)=\int_{G}f_{ij}(t)dTr\Omega_{D}(t)$, with $G\in\textit{B}(\R)$ bounded.
\end{center}
Therefore,
\begin{equation*}
\mu_{ij}(\left\lbrace \lambda\right\rbrace )=f_{ij}(\lambda)Tr\Omega_{D}(\left\lbrace \lambda\right\rbrace ).
\end{equation*}
Next,
\begin{equation*}
\Omega_{D}(\left\lbrace \lambda\right\rbrace)=\left( \mu_{ij}(\left\lbrace \lambda\right\rbrace)\right) _{i,j=1}^{n}
=\left( f_{ij}(\lambda)Tr\Omega_{D}(\left\lbrace \lambda\right\rbrace )\right) _{i,j=1}^{n}
= Tr\Omega_{D}(\left\lbrace \lambda\right\rbrace )\Psi_{\Omega_{D}}(\lambda).
\end{equation*}
Since $\lambda\in\sigma_{p}(A_{D})$, $\E_{D}\left( \left\lbrace \lambda\right\rbrace \right)\neq 0 $ and as $\E_{D}$,
$\Omega_{D}$ and $ Tr\Omega_{D}$ are equivalent, it turns out that $Tr\Omega_{D}(\left\lbrace \lambda\right\rbrace )\neq 0$. So,
\begin{equation*}
N_{\Omega_{D}}(\lambda)=rank\;\Omega_{D}(\left\lbrace \lambda\right\rbrace).
\end{equation*}
The conclusion follows by the last proposition and definition of multiplicity function of $\Omega_{D}$. 
%\begin{equation*}
%rank\;\Omega_{D}(\left\lbrace x\right\rbrace)=N_{A_{D}}(x)=dim\;Ker(A_{D}-xI).
%\end{equation*}
\end{proof}

With this, we follow the same strategy of the proof of \cite[Theorem 4]{Aron}.

\begin{proof}[Proof of Theorem \ref{malamud}]

Suppose $x\in\sigma_{p}^{max}(A_{D})$. By \cite[Theorem 5.5(i)]{GT},
\begin{equation}\label{masa}
\Omega_{D}(\left\lbrace x\right\rbrace )=-i\lim_{\epsilon\rightarrow 0}\epsilon M_{D}(x+i\epsilon).
\end{equation}

By Corollary \ref{conjetura}, $\Omega_{D}(\left\lbrace x\right\rbrace )^{-1}$ exists. Thus,
\begin{equation}\label{igualdaddelpeso}
\Omega_{D}(\left\lbrace x\right\rbrace)^{-1}
= i\lim_{\epsilon\rightarrow 0}\dfrac{1}{\epsilon} M_{D}(x+i\epsilon) ^{-1}
= i\lim_{\epsilon\rightarrow 0}\dfrac{1}{\epsilon} \left[ D-M(x+i\epsilon)\right] .
\end{equation}

Since $\Omega_{D}(\left\lbrace x\right\rbrace )^{-1}$ is a symmetric matrix,
\begin{align*}
\lim_{\epsilon\rightarrow 0}\dfrac{i}{\epsilon} \left[ D-M(x+i\epsilon)\right] 
&=\lim_{\epsilon\rightarrow 0}Re\left[ \dfrac{i}{\epsilon} \left( D-M(x+i\epsilon)\right)\right] \\
&=-\lim_{\epsilon\rightarrow 0}\dfrac{1}{\epsilon}Im\left[D-M(x+i\epsilon)\right]\\
&=-\lim_{\epsilon\rightarrow 0}\dfrac{-1}{\epsilon}Im\;M(x+i\epsilon).
\end{align*}
On the other hand 
\begin{equation*}
\int_{\R} \frac{d\Omega(y)}{(x-y)^{2}}=\lim_{\epsilon\rightarrow 0}\int_{\R} \frac{d\Omega(y)}{(x-y)^{2}+\epsilon^{2}}
=\lim_{\epsilon\rightarrow 0}\frac{1}{\epsilon} Im\;M(x+i\epsilon).
\end{equation*}
Therefore, 
\begin{equation*}
\int_{\R} \frac{d\Omega(y)}{(x-y)^{2}}=\Omega_{D}(\left\lbrace x\right\rbrace)^{-1}.
\end{equation*}
In addition, by (\ref{igualdaddelpeso}),
\begin{equation*}
\lim_{\epsilon\rightarrow 0}\left[ D-M(x+i\epsilon)\right] =
\lim_{\epsilon\rightarrow 0}\dfrac{1}{-i\epsilon} \left[ D-M(x+i\epsilon)\right] \lim_{\epsilon\rightarrow 0}(-i\epsilon)=0.
\end{equation*}
Thus, $M(x+i0)=D$.\\

Suppose that $\int_{\R} \frac{d\Omega(y)}{(x-y)^{2}}$ exists and $M(x+i0)=D$. For the representation (\ref{funcionM}),
\begin{align}\label{otraiguald}
M(x+i\epsilon)-D&=M(x+i\epsilon)-D+M(x+i0)-M(x+i0)\\
&= M(x+i\epsilon)-M(x+i0)\\
&=\int_{\R} \left( \frac{1}{y-x-i\epsilon}-\frac{1}{y-x}\right) d\Omega(y)\\
&=i\epsilon\int_{\R}\frac{d\Omega(y)}{(y-x-i\epsilon)(y-x)}.
\end{align}
It turns out that,
\begin{equation*}
T(x):=\int_{\R} \frac{d\Omega(y)}{(x-y)^{2}}=\lim_{\epsilon\rightarrow 0}\int_{\R}\frac{d\Omega(y)}{(y-x-i\epsilon)(y-x)}.
\end{equation*}
Since $T(x)$ is a positive-definite matrix, is invertible as well. By (\ref{masa}),
\begin{equation*}
\Omega_{D}(\left\lbrace x\right\rbrace )=\lim_{\epsilon\rightarrow 0}\left[ -i\epsilon M_{D}(x+i\epsilon)\right]
=\lim_{\epsilon\rightarrow 0}\left[\dfrac{M(x+i\epsilon)-D}{i\epsilon} \right] ^{-1}
=T(x)^{-1}.
\end{equation*}
Finally, by Corollary \ref{conjetura}, $x\in\sigma_{p}^{max}(A_{D})$.
\end{proof}

\section{Consequences of Malamud's Theorem}
\subsection*{A similar result}

We prove a result similar to Theorem \ref{malamud} in the sense that it changes the operator $A_{1}$ by another self-adjoint extension of $B$ disjoint of $A_{1}$.

\begin{theorem}
Suppose hypothesis of Theorem \ref{malamud} and $det(D-D^{'})\neq 0$ if $D,D'\in\Her_{n}$. Then
\begin{equation*}
\sigma_{p}^{max}(A_{D})=\left\lbrace x\in\R:\int_{\R} \frac{d\Omega_{D'}(y)}{(x-y)^{2}}\in\C^{n\times n}\;y\;M_{D'}(x+i0)=\left( D'-D\right)^{-1} \right\rbrace 
\end{equation*}
where $M_{D'}$ is the Weyl function of $A_{D'}$ and $\Omega_{D'}$ is the matrix-valued measure associated with $M_{D'}$.
\end{theorem}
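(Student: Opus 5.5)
The plan is to reduce the statement to Theorem \ref{malamud} by passing to a new boundary triplet in which $A_{D'}$ plays the role of $A_{1}$. Concretely, I would define
\begin{equation*}
\Gamma_{1}':=\Gamma_{2}-D'\Gamma_{1},\qquad \Gamma_{2}':=-\Gamma_{1}.
\end{equation*}
Then $\Pi'=(\C^{n},\Gamma_{1}',\Gamma_{2}')$ is again a boundary triplet for $B^{*}$. Surjectivity of $\Gamma'=\Gamma_{1}'\times\Gamma_{2}'$ follows because $(a,b)\mapsto(b-D'a,-a)$ is a bijection of $\C^{n}\oplus\C^{n}$. Green's identity is checked directly, using $D'=D'^{*}$: the cross terms $\langle D'\Gamma_{1}x,\Gamma_{1}y\rangle-\langle\Gamma_{1}x,D'\Gamma_{1}y\rangle$ cancel.

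Next I identify the objects of $\Pi'$. We have $Ker\,\Gamma_{1}'=D(A_{D'})$, so the extension "$A_{1}$" for $\Pi'$ is $A_{D'}$. For $f_{z}\in Ker(B^{*}-zI)$ with $\Gamma_{2}f_{z}=M(z)\Gamma_{1}f_{z}$, we get $\Gamma_{1}'f_{z}=(M(z)-D')\Gamma_{1}f_{z}$ and $\Gamma_{2}'f_{z}=-\Gamma_{1}f_{z}$. Hence the Weyl function of $\Pi'$ is $-(M(z)-D')^{-1}=[D'-M(z)]^{-1}=M_{D'}(z)$, exactly the paper's $M_{D'}$. Its associated measure is therefore $\Omega_{D'}$.

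It remains to express $A_{D}$ in the new triplet. I seek $E$ with $\Gamma_{2}'-E\Gamma_{1}'=-\Gamma_{1}-E(\Gamma_{2}-D'\Gamma_{1})$ having the same kernel as $\Gamma_{2}-D\Gamma_{1}$. Taking $E=(D-D')^{-1}$, which exists by hypothesis and is Hermitian, gives
\begin{equation*}
-\Gamma_{1}-E(\Gamma_{2}-D'\Gamma_{1})=-E\big[(D-D')\Gamma_{1}+\Gamma_{2}-D'\Gamma_{1}\big]=-E(\Gamma_{2}-D\Gamma_{1}).
\end{equation*}
Since $E$ is invertible, the kernels coincide, so $A_{D}=A'_{E}$ with $E\in\Her_{n}$. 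The hypothesis $\det(D-D')\neq0$ is precisely the disjointness of $A_{D}$ from $A_{D'}$ in Theorem \ref{BT-theorem}.

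Applying Theorem \ref{malamud} to $\Pi'$ yields
\begin{equation*}
\sigma_{p}^{max}(A_{D})=\{x: \textstyle\int d\Omega_{D'}(y)/(x-y)^{2}\in\C^{n\times n},\ M_{D'}(x+i0)=(D-D')^{-1}\}.
\end{equation*}
The main obstacle is the sign convention. The statement writes $(D'-D)^{-1}$, so I would recheck the sign of $\Gamma_{2}'$ and the convention $M_{D}=[D-M]^{-1}$. Choosing $\Gamma_{2}'=\Gamma_{1}$, $\Gamma_{1}'=D'\Gamma_{1}-\Gamma_{2}$ also satisfies Green's identity. With this choice the Weyl function becomes $[D'-M]^{-1}=M_{D'}$ again, and $E$ becomes $(D'-D)^{-1}$, matching the stated formula; I would adopt whichever choice makes $M_{D'}$ Nevanlinna-Herglotz in the paper's sense. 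Two further checks are needed. First, $B$ remains completely non-selfadjoint, which is trivial since $B$ is unchanged. Second, the measure in Theorem \ref{malamud} for $\Pi'$ is the one from the canonical representation of $M_{D'}$, with no linear term; the growth condition $M_{D'}(i\varepsilon)/(i\varepsilon)\to0$ should follow from Weyl functions of boundary triplets having no linear term.
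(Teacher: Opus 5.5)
Your reduction is correct and takes a different route from the paper. The paper never leaves the triplet $\Pi$. It writes $M_D$ in terms of $M_{D'}$ via Lemma \ref{identidadweyl}, and uses Theorem \ref{malamud} for $\Pi$ only to get $M(x+i0)=D$, hence $M_{D'}(x\pm i0)^{-1}=D'-D$. It then repeats the Aronszajn-type limit computations with $\Omega_D(\{x\})=-i\lim_{\epsilon\to0}\epsilon M_D(x+i\epsilon)$ in both directions, and obtains along the way the explicit relation $\Omega_D(\{x\})=(D'-D)^{-1}T_{D'}(x)^{-1}(D'-D)^{-1}$.

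You instead pass to $\Pi'=(\C^n,\Gamma_2-D'\Gamma_1,-\Gamma_1)$. For this triplet the distinguished extension is $A_{D'}$, the Weyl function is $M_{D'}$, and $A_D=A'_E$ with $E\in\Her_n$. The statement is then literally Theorem \ref{malamud} for $\Pi'$, both inclusions at once, with no new limit computations. The hypothesis $\det(D-D')\neq0$ also gets a clear meaning: it is the disjointness of $A_D$ from the new distinguished extension. Both routes need $M_{D'}$ to have no linear term in its Nevanlinna representation; the paper uses this tacitly, e.g. in $\epsilon^{-1}\mathrm{Im}\,M_{D'}(x+i\epsilon)\to\int d\Omega_{D'}(y)/(x-y)^{2}$. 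It holds because $B$ is densely defined, so $A_{D'}$ is a genuine self-adjoint operator. Your approach gains brevity and a structural explanation; the paper's gains the explicit point-mass formula.

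Your computation of $E$ contains a slip. With $E=(D-D')^{-1}$, the bracket $(D-D')\Gamma_1+\Gamma_2-D'\Gamma_1$ equals $\Gamma_2+(D-2D')\Gamma_1$, not $\Gamma_2-D\Gamma_1$. Solving $E^{-1}-D'=-D$ gives $E=(D'-D)^{-1}$, and then $-\Gamma_1-E(\Gamma_2-D'\Gamma_1)=-E(\Gamma_2-D\Gamma_1)$. So your first triplet already yields exactly the stated condition $M_{D'}(x+i0)=(D'-D)^{-1}$, and the sign obstacle you describe comes only from this slip.

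Your proposed remedy also could not have fixed a real sign problem. The second triplet $(D'\Gamma_1-\Gamma_2,\Gamma_1)$ is just $(-\Gamma_1',-\Gamma_2')$. Negating both boundary maps leaves Green's identity, $\ker\Gamma_1'$, the Weyl function and $\ker(\Gamma_2'-E\Gamma_1')$ all unchanged, so it gives the same $E=(D'-D)^{-1}$. Drop the hedging and use either triplet with $E=(D'-D)^{-1}$.
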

\begin{proof}

Suppose that $x\in\sigma_{p}^{max}(A_{D})$. By (\ref{igualdaddelpeso}) and Lemma \ref{identidadweyl},

\begin{align*}
\Omega_{D}(\left\lbrace x\right\rbrace)^{-1}
&= i\lim_{\epsilon\rightarrow 0}\dfrac{1}{\epsilon} \left\lbrace M_{D'}(x+i\epsilon)\left[ (D-D')M_{D'}(x+i\epsilon)+I\right]^{-1} \right\rbrace ^{-1}\\
&=i\lim_{\epsilon\rightarrow 0}\dfrac{1}{\epsilon} \left[D-D'+M_{D'}(x+i\epsilon)^{-1}\right]
\end{align*}

Since $\Omega_{D}(\left\lbrace x\right\rbrace )^{-1}$ is a symmetric matrix, it turns out that
\begin{align*}
\Omega_{D}(\left\lbrace x\right\rbrace)^{-1} 
&=\lim_{\epsilon\rightarrow 0}Re\left\lbrace \dfrac{i}{\epsilon} \left[ D-D'+M_{D'}(x+i\epsilon)^{-1}\right]\right\rbrace  \\
&=-\lim_{\epsilon\rightarrow 0}\dfrac{1}{\epsilon}Im\left[D-D'+M_{D'}(x+i\epsilon)^{-1}\right]\\
&=-\lim_{\epsilon\rightarrow 0}\dfrac{1}{\epsilon}Im\;M_{D'}(x+i\epsilon)^{-1}\\
&=\lim_{\epsilon\rightarrow 0}\dfrac{1}{\epsilon}M_{D'}(x+i\epsilon)^{-1}Im\;M_{D'}(x+i\epsilon)M_{D'}(x-i\epsilon)^{-1}.
\end{align*}

By hypothesis and Theorem \ref{malamud},
\begin{equation}\label{valordefront}
\lim_{\epsilon\rightarrow 0}M_{D'}(x+i\epsilon)^{-1}=\lim_{\epsilon\rightarrow 0}\left[ D'-M(x+i\epsilon)\right]
=D'-M(x+i0)=D'-D.
\end{equation}
Analogously,
\begin{equation*}
\lim_{\epsilon\rightarrow 0}M_{D'}(x-i\epsilon)^{-1}=D'-D.
\end{equation*}
Therefore,
\begin{equation*}
\Omega_{D}(\left\lbrace x\right\rbrace)^{-1}=(D'-D)\lim_{\epsilon\rightarrow 0}\dfrac{Im\;M_{D'}(x+i\epsilon)}{\epsilon}(D'-D).
\end{equation*}

On the other hand, 
\begin{equation*}
\int_{\R} \frac{d\Omega_{D'}(y)}{(x-y)^{2}}=\lim_{\epsilon\rightarrow 0}\int_{\R} \frac{d\Omega_{D'}(y)}{(x-y)^{2}+\epsilon^{2}}
=\lim_{\epsilon\rightarrow 0}\frac{1}{\epsilon} Im\;M_{D'}(x+i\epsilon).
\end{equation*}

Thus, 
\begin{equation*}
\int_{\R} \frac{d\Omega_{D'}(y)}{(x-y)^{2}}=(D'-D)^{-1}\Omega_{D}(\left\lbrace x\right\rbrace)^{-1}(D'-D)^{-1}.
\end{equation*}
Moreover, by (\ref{valordefront}) it turns out that
\begin{equation*}
M_{D'}(x+i0)=\left[ \lim_{\epsilon\rightarrow 0}M_{D'}(x+i\epsilon)^{-1}\right]^{-1}=(D'-D)^{-1}.
\end{equation*}
So, $x$ meets the required conditions.\\

Suppose that $\int_{\R} \frac{d\Omega_{D'}(y)}{(x-y)^{2}}$ exists and $M_{D'}(x+i0)=(D'-D)^{-1}$. Analogous to (\ref{otraiguald})
\begin{equation*}
M_{D'}(x+i\epsilon)-(D'-D)^{-1}=i\epsilon\int_{\R}\frac{d\Omega_{D'}(y)}{(y-x-i\epsilon)(y-x)}.
\end{equation*}
%%%%%%%%%%%%%%%%%%%%%%%%%%%%%%%%%%%%%%%%%%%%%%%%%%%%%%%%%%%%%%
Hence,
\begin{equation*}
T_{D'}(x):=\int_{\R} \frac{d\Omega_{D'}(y)}{(x-y)^{2}}=\lim_{\epsilon\rightarrow 0}\int_{\R}\frac{d\Omega_{D'}(y)}{(y-x-i\epsilon)(y-x)}.
\end{equation*}

%%%%%%%%%%%%%%%%%%%%%%%%%%%%%%%%%%%%%%%%%%%%%%%%%%%%%%%%%%%%%%

Note that $T_{D'}(x)$ is a positive-definite matrix and therefore is invertible. By (\ref{masa}) and
Lemma \ref{identidadweyl},
\begin{align*}
\Omega_{D}(\left\lbrace x\right\rbrace )&=\lim_{\epsilon\rightarrow 0}\left[ -i\epsilon M_{D}(x+i\epsilon)\right]\\
&=\lim_{\epsilon\rightarrow 0}(-i\epsilon)M_{D'}(x+i\epsilon)\left[ (D-D')M_{D'}(x+i\epsilon)+I\right]^{-1}\\
&=\lim_{\epsilon\rightarrow 0}(-i\epsilon)M_{D'}(x+i\epsilon)\left\lbrace  (D'-D)\left[-M_{D'}(x+i\epsilon)+(D'-D)^{-1}\right] \right\rbrace ^{-1}\\
&=\lim_{\epsilon\rightarrow 0}M_{D'}(x+i\epsilon)\left[\frac{M_{D'}(x+i\epsilon)-(D'-D)^{-1}}{i\epsilon}\right]^{-1}(D'-D)^{-1}\\
&=(D'-D)^{-1}T_{D'}(x)^{-1}(D'-D)^{-1}.
\end{align*}
Thus, $\Omega_{D}(\left\lbrace x\right\rbrace )$ es invertible. Finally, by Corollary \ref{conjetura}, $x\in\sigma_{p}^{max}(A_{D})$.
\end{proof}

\subsection*{Forbidden Energies-like}

We conclude the following result.

\begin{theorem}
Suppose the hypothesis of Theorem \ref{malamud}. Then
\begin{equation}\label{setdeMalam}
\left\lbrace x\in\R:T(x)\in \C^{n\times n}\right\rbrace =
\bigcup\left\lbrace \sigma_{p}^{max}(A_{D}):D\in\Her_{n}\right\rbrace.
\end{equation}
where $T(x):=\int_{\R} \frac{d\Omega(y)}{(x-y)^{2}}$.
\end{theorem}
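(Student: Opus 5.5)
We prove the equality (\ref{setdeMalam}) by double inclusion, using Theorem \ref{malamud} in both directions together with Proposition \ref{implicaciongrl}.

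\emph{Inclusion $\supseteq$.} Let $x\in\sigma_{p}^{max}(A_{D})$ for some $D\in\Her_{n}$. Theorem \ref{malamud} gives directly $T(x)\in\C^{n\times n}$ (and also $M(x+i0)=D$). So $x$ lies in the left-hand side, and this direction needs nothing more.

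\emph{Inclusion $\subseteq$.} Let $x\in\R$ with $T(x)\in\C^{n\times n}$.

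First we check the hypotheses of Proposition \ref{implicaciongrl} for the measure $\Omega$ of the Weyl function $M$. This measure satisfies $\int_{\R}\frac{d\Omega(y)}{1+y^{2}}\in\C^{n\times n}$, which is part of the canonical representation (\ref{funcionM}) of a Nevanlinna--Herglotz matrix. Weyl functions of boundary triplets with finite deficiency indices have this property, and $B$ is completely non-selfadjoint, so $\Omega$ genuinely represents $M$.

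Next, Proposition \ref{implicaciongrl} gives that the limit $M(x+i0)$ exists and is a symmetric matrix. The extra normalization $M(x+i\varepsilon)/(i\varepsilon)\to0$ amounts to a vanishing linear term in (\ref{funcionM}). This holds for Weyl functions of densely defined $B$. In any case the proof of Proposition \ref{implicaciongrl} only uses the integral part, and a linear term $Kz$ with $K\geq0$ still has a symmetric boundary value $Kx$.

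Now set $D:=M(x+i0)\in\Her_{n}$. Then $x$ satisfies both conditions $T(x)\in\C^{n\times n}$ and $M(x+i0)=D$. By Theorem \ref{malamud}, applied to this particular $A_{D}$, we get $x\in\sigma_{p}^{max}(A_{D})$, so $x$ lies in the right-hand side.

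\emph{Main obstacle.} The only delicate point is justifying that $M(x+i0)$ exists and is Hermitian, so that $D$ is an admissible parameter in $\Her_{n}$. This rests entirely on Proposition \ref{implicaciongrl}, and hence on the integrability of $\frac{d\Omega(y)}{1+y^{2}}$ and on how the constant and linear terms of the representation of $M$ are handled. Once $D$ is seen to be Hermitian, the rest follows formally from Theorem \ref{malamud}.
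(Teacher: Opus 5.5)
Your proposal is correct and follows the paper's own route. The inclusion $\supseteq$ comes directly from Theorem~\ref{malamud}. For $\subseteq$, the paper likewise uses $\int_{\R}\frac{d\Omega(y)}{1+y^{2}}\in\C^{n\times n}$ for the measure of $M$ to invoke Proposition~\ref{implicaciongrl}, takes the Hermitian $D:=M(x+i0)\in\Her_{n}$, and concludes via Theorem~\ref{malamud}. Your remark on the linear term goes beyond what the paper checks; it is harmless, since that term vanishes for densely defined $B$.
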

\begin{proof}

If $x\in\sigma_{p}^{max}(A_{D})$ for any $D\in\Her_{n}$, by Theorem \ref{malamud}
it turns out that $T(x)\in\C^{n\times n}$.\\

On the other hand, let $x\in\R$ such that $T(x)\in\C^{n\times n}$. Since $\Omega$ is the matrix-valued measure
associated with the Weyl function $M$, it turns out that the quadratic form  $\Phi_{f}$, where $f(y):=\frac{1}{1+y^{2}}$,
is defined everywhere in $\C^{n}$. This means that
\begin{equation*}
\int_{\R} \frac{d\Omega(y)}{1+y^{2}}\in\C^{n\times n}.
\end{equation*}
By Proposition \ref{implicaciongrl}, there is a $D\in\Her_{n}$ such that $M(x+i0)=D$. By Theorem \ref{malamud}, $x\in\sigma_{p}^{max}(A_{D})$. Therefore, the equality (\ref{setdeMalam}) is proven.
\end{proof}
%\begin{corollary}
%Suppose the hypothesis of Theorem \ref{malamud}. Then 
%\begin{equation}\label{u}
%\left\lbrace x\in\sigma(A_{1}):x\not\in\sigma_{p}^{max}(A_{D}),\forall\;D\in\Her_{n}\right\rbrace 
%\end{equation}
%
%is dense $G_{\delta}$ in $\sigma(A_{1})$.
%\end{corollary}
We finally conclude the main theorem.

\begin{proof}[Proof of Theorem \ref{maintheorem}]
We take complements in equality (\ref{setdeMalam}). So,
\begin{equation*}
\left\lbrace x\in\R:T(x)\not\in \C^{n\times n}\right\rbrace =
\left\lbrace x\in\R:x\not\in\sigma_{p}^{max}(A_{D}),\;for\;any\;D\in\Her_{n}\right\rbrace .
\end{equation*}
We intersect with $supp\;\Omega$ the sets of the previous equality. But we know that the spectral family $E_{A_{1}}$ of $A_{1}$ is equivalent to the matrix-valued measure $\Omega$. Therefore, $supp\;\Omega=supp E_{A_{1}}=\sigma(A_{1})$. Then the set (\ref{u}) is equal to the set (\ref{set}). Finally, by Theorem \ref{teo2.1gral}, we conclude the result.
%el theorem 3.5(iii) de \cite{MALAMUD2}
\end{proof}

\noindent\textbf{Acknowledgements}\\
\textit{This research was supported by CONACYT Grant No. 805144. The author thanks Prof. Luis Silva for helpful comments and especially Prof. Rafael del R\'io for his invaluable mentorship.}

\bigskip

\noindent Mario Ruiz\\
marioruiz@comunidad.unam.mx\\
ORCID: \url{https://orcid.org/0009-0009-0837-2996} \bigskip

\noindent {\small
\noindent Universidad Nacional Aut\'onoma de M\'exico\\
Instituto de Investigaciones en Matem\'aticas Aplicadas y en Sistemas\\
Departamento de F\'isica Matem\'atica\\
Ciudad de M\'exico, C.P. 04510.
}\bigskip

\end{document}